\newcommand{\marginlabel}[1]%
 {\mbox{}\marginpar{\raggedleft\hspace{0pt}\bfseries\sf#1}}
\newtheorem{thm}{Theorem}[section]
\newtheorem{lem}[thm]{Lemma}
\newtheorem{prop}[thm]{Proposition}
\newtheorem{cor}[thm]{Corollary}
\newtheorem{claim}[thm]{Claim}
\newtheorem{ntt}[thm]{Notation}
\newtheorem{rem}[thm]{Remark}
\newtheorem{prop-def}[thm]{Proposition-Definition}
\newtheorem{ex}[thm]{Example}
\newtheorem*{thmI}{Theorem}
\newtheorem{thmA}{Theorem}[section]
\theoremstyle{definition}
\newtheorem{deff}[thm]{Definition}
\newcommand{\B}{\mathcal{B}}
\newcommand{\D}{\mathcal{D}}
\newcommand{\CC}{\mathbb{C}}
\newcommand{\E}{\mathcal{E}}
\newcommand{\F}{\mathcal{F}}
\newcommand{\I}{\mathcal{I}}
\renewcommand{\L}{\mathcal{L}}
\renewcommand{\O}{\mathcal{O}}
\renewcommand{\P}{\mathcal{P}}
\newcommand{\PP}{\mathbb{P}}
\newcommand{\T}{\mathcal{T}}
\newcommand{\Y}{\mathcal{Y}}
\newcommand{\st}{\;\vline\;} 
\newcommand{\res}[2]{\left.#1\right|_{#2}} 
\newcommand{\abs}[1]{\left\vert#1\right\vert}
\newcommand{\set}[1]{\left\{#1\right\}}
\DeclareMathOperator{\id}{id}
\DeclareMathOperator{\Alb}{Alb}
\DeclareMathOperator{\alb}{alb}
\DeclareMathOperator{\rk}{rk}
\DeclareMathOperator{\cExt}{\mathcal{E}\mathit{xt}}
\DeclareMathOperator{\gv}{gv}
\DeclareMathOperator{\Pic}{Pic}
\DeclareMathOperator{\ch}{char}
\DeclareMathOperator{\supp}{supp}
\DeclareMathOperator{\Bs}{Bs}
\DeclareMathOperator{\codim}{codim}
\newenvironment{enumerate*}{\begin{enumerate}[topsep=4pt, partopsep=4pt,
itemsep=0pt]}{\end{enumerate}}
\newenvironment{itemize*}{\begin{itemize}[topsep=4pt, partopsep=4pt, itemsep=0pt]}{\end{itemize}}
\newenvironment{description*}{\begin{description}[topsep=4pt, partopsep=4pt,
itemsep=0pt]}{\end{description}}
\begin{document}

\title{Generic Vanishing Index and the Birationality of the Bicanonical Map of Irregular Varieties}

\author[M. Lahoz]{Mart\'{\i} Lahoz}
\address{Departament d'\`Algebra i Geometria, Facultat de Matem\`atiques, 
Universitat de Barcelona. Gran Via, 585, 08007 Barcelona. Spain}
\email{{\tt marti.lahoz@ub.edu}}
\curraddr{Mathematisches Institut, Universit\"at Bonn, Endenicher Allee 60, 53115 Bonn, Germany.}

\thanks{The author has been partially supported
by the Proyecto de Investigaci\'on MTM2009-14163-C02-01. This
paper was revised while the author was supported by the SFB/TR 45 `Periods, Moduli Spaces and
Arithmetic of Algebraic Varieties'.}

\begin{abstract} We prove that any smooth complex projective variety with generic vanishing index
bigger or equal than $2$ has birational bicanonical map.
Therefore, if $X$ is a smooth complex projective variety $X$ with maximal Albanese dimension and
non-birational bicanonical map, then the Albanese image of $X$ is fibred by subvarieties of
codimension at most $1$ of an abelian subvariety of $\Alb X$.
\end{abstract}

\maketitle

\markboth{M. Lahoz}{\bf Generic Vanishing Index and the Birationality of the Bicanonical Map}

\section{Introduction}
In the study of smooth complex algebraic varieties, the natural maps provided by the holomorphic
forms defined in the variety, have a special importance. For example, the invertible sheaf
$\omega_X$ of differential $n$-forms (where $n$ is the dimension of $X$) produces a map to a
projective space, known as the canonical map. The multiples of this canonical sheaf
$\omega_X^{\otimes m}$ produce in this way the pluricanonical maps. 
\begin{equation*}
 \varphi_m:X\dashrightarrow \PP^N= \PP(H^0(X,\omega_X^{\otimes m})^\vee).
\end{equation*}
When $\varphi_m$ gives a birational equivalence between $X$ and its image, we will simply say that
$\varphi_m$ is birational.
We say that $X$ is of general type if for some $m>0$ the rational map $\varphi_m$ is birational.

For example, the curves of general type are those of genus $g\geq 2$. The tricanonical map
$\varphi_3$ is always birational for such curves and the bicanonical $\varphi_2$ is also birational
once that $g\geq 3$. Moreover, the canonical map is birational as soon as the curve is
non-hyperelliptic.

For surfaces, Bombieri {\cite{Bom}} have given sharp numerical conditions for the birationality of
$\varphi_m$ for $m\geq 3$. The bicanonical map has revealed to be more complicated and has studied
by many algebraic geometers. 
In fact, the surfaces with irregularity $q(S)\leq 1$ and $\chi(S,\omega_S)=1$ are not completely
understood and there is no classification about which ones have birational $\varphi_2$. For a modern
review of the state of the art in the surface case, we refer to \cite[Thm. 8]{BCP}.

For higher dimensions not many results are known in general. Nevertheless, the example of the
bicanonical map on surfaces shows that for small irregularity $q(X)=h^0(X,\Omega_X^1)$, the
classification becomes more difficult. For complex varieties, recall that the differential $1$-forms
give rise to the Albanese map
\begin{equation*}
  \alb: X\to \Alb X= H^0(X,\Omega_X^1)^\vee/H_1(X,\mathbb{Z}).
\end{equation*}
from $X$ to an abelian variety of dimension $q(X)=h^0(X,\Omega_X^1)$. We say that 
$X$ is \emph{irregular} if, and only if, $\Alb X$ is not trivial, i.e. $q(X)>0$. And we say that $X$
is of \emph{maximal Albanese dimension (m.A.d)} if, and only if, the Albanese map $\alb: X\to \Alb
X$ is generically finite onto its image.

It turns out that some properties of m.A.d varieties seem to behave independently of the dimension
and, indeed, Chen-Hacon showed that this is the case for their pluricanonical maps. 

\begin{thmI}[Chen-Hacon. {\cite{CHlinseries}}]$\;$
\begin{enumerate}
\item $X$ m.A.d and $\chi(\omega_X)>0$ $\Rightarrow$ $X$ is of general type, furthermore, $\varphi_3$
is birational.
\item $X$ m.A.d $\Rightarrow$ $\varphi_6$ is the stable pluricanonical map.
\end{enumerate}
\end{thmI}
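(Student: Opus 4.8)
The plan is to derive both parts from generic vanishing theory together with the Fourier--Mukai and continuous-global-generation techniques of Pareschi--Popa. Write $a\colon X\to A=\Alb X$ for the Albanese map, set $\hat A=\Pic^0(X)$, and consider the cohomological support loci $V^i(\omega_X)=\set{P\in\hat A\st h^i(X,\omega_X\otimes P)\neq 0}$. The first step is to record that, since $X$ has maximal Albanese dimension, the Green--Lazarsfeld generic vanishing theorem gives $\codim_{\hat A}V^i(\omega_X)\geq i$ for all $i$, i.e.\ $\omega_X$ is a $GV$-sheaf. I would then reduce birationality of $\varphi_m$ to a separation statement: it suffices to show that for two general points $x_1,x_2\in X$, writing $Z=\set{x_1,x_2}$, the sheaf $\omega_X^{\otimes m}$ has enough sections to generate $\omega_X^{\otimes m}\otimes\O_Z$ and to separate infinitely near directions, which amounts to controlling $H^1(X,\omega_X^{\otimes m}\otimes\I_Z\otimes P)$ for suitable $P\in\hat A$.

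For part \emph{(a)} the hypothesis $\chi(\omega_X)>0$ supplies the needed positivity. Since $\omega_X$ is a $GV$-sheaf, the loci $V^i(\omega_X)$ with $i>0$ are proper, so $\chi(\omega_X)=h^0(X,\omega_X\otimes P)$ for generic $P$; hence $\chi(\omega_X)>0$ forces $V^0(\omega_X)=\hat A$ and, on the Fourier--Mukai side, that the transform of $\omega_X$ has positive generic rank. Feeding this into the Pareschi--Popa continuous-global-generation and multiplication-of-sections machinery, one produces, for general $P\in\hat A$ and away from the exceptional locus of $a$, sections of $\omega_X\otimes P$ separating $x_1$ from $x_2$. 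I would then pick three twists $P_1,P_2,P_3\in\hat A$ with $P_1\otimes P_2\otimes P_3\cong\O_X$ and multiply the corresponding sections to land in $H^0(X,\omega_X^{\otimes 3})$; here the third copy of $\omega_X$ acts as the positive twist that upgrades the continuous generation of $\omega_X^{\otimes 2}$ to honest global generation, which is exactly why three factors suffice and $\varphi_3$ comes out birational. The same positivity forces $X$ to be of general type, since otherwise, by Ein--Lazarsfeld, the Albanese image would be fibred by abelian subvarieties and $\chi(\omega_X)$ would vanish.

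For part \emph{(b)} the difficulty -- and the reason $6$ appears in place of $3$ -- is the case $\chi(\omega_X)=0$, where the Fourier--Mukai transform acquires torsion, the continuous generation above degenerates, and $X$ need not be of general type. Here I would invoke the structure theorem for the support loci (Green--Lazarsfeld, Simpson, Ein--Lazarsfeld): every component of $V^0(\omega_X)$ is a torsion translate of an abelian subvariety, and a component of excess dimension comes from an irregular fibration $f\colon X\to Y$ onto a lower-dimensional normal variety of maximal Albanese dimension. The plan is to induct on $\dim X$: relate $\omega_X^{\otimes m}$ to the pluricanonical sheaves of the general fibre of $f$ and to the pullback of a pluricanonical-type sheaf on the base $Y$, apply part \emph{(a)} or the inductive hypothesis to the factors, and recombine. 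Heuristically the factor $3$ handles separation along the fibres of $f$ while an extra factor $2$ promotes a $GV$-sheaf on $Y$ to a continuously globally generated one, so that $6=2\cdot 3$ works uniformly and stabilises $\varphi_m$ for all $m\geq 6$.

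The step I expect to be the main obstacle is making the separation-of-points argument uniform over all of $X$ and having it survive the inductive descent through $f$. Generic vanishing guarantees the existence of the sections needed to separate two general points, but converting \emph{continuous} global generation into genuine global generation of the untwisted sheaf $\omega_X^{\otimes m}$ -- so that the separating sections really come from the complete system $\abs{mK_X}$ rather than from twists $\omega_X\otimes P$ -- is delicate. Controlling the base locus contributed by the fibration, and choosing the auxiliary twists $P_i$ compatibly with $f$ so that the product step stays inside $\abs{mK_X}$, is where the careful bookkeeping concentrates, and it is ultimately this that pins down the pluricanonical thresholds ($3$ in part \emph{(a)}, $6$ in part \emph{(b)}).
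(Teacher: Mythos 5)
First, some context you could not have seen: the paper never proves this statement --- it is quoted from Chen--Hacon \cite{CHlinseries} purely as background motivation, so there is no internal proof to compare against; the closest in-paper material is the birationality criterion of Lemma \ref{lem:birprov} and Theorem \ref{thm:fibr}, which adapt the same Pareschi--Popa circle of ideas to the bicanonical map under the hypothesis $\gv(\omega_X)\geq 2$. Judged on its own terms, your sketch of part \emph{(a)} follows the Pareschi--Popa route (\cite[Thm. 6.1]{PPreg3}) rather than Chen--Hacon's original argument, which is a legitimate alternative --- but its central step is missing, not merely deferred. From $\chi(\omega_X)>0$ you obtain $V^0(\omega_X)=\Pic^0X$ and positive generic rank of the Fourier--Mukai transform, which is a GV-type statement; what the continuous-global-generation machinery actually needs is M-regularity of $a_*(\omega_X\otimes\I_x)$, i.e.\ $\codim V^1_a(\omega_X\otimes\I_x)\geq 2$, and this is exactly the control of the base-point locus $\B_a(x)$ that enters the paper's Lemma \ref{lem:birprov} as a \emph{hypothesis}. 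That control does not follow from $\chi(\omega_X)>0$: for a smooth theta divisor one has $\chi(\omega_X)=1$, yet $\B_a(x)$ is a divisor in $\Pic^0X$ (it is a translate of the dual theta divisor), so $\omega_X\otimes\I_x$ is not continuously globally generated and your chain ``$\omega_X\otimes\I_x$ CGG, hence $\omega_X^{\otimes 2}\otimes\I_x\otimes\alpha$ globally generated'' would prove birationality of the \emph{bi}canonical map --- which is false for theta divisors, while part \emph{(a)} is true for them. So the step you label ``careful bookkeeping'' is in fact the heart of the theorem, and the mechanism must be different (Pareschi--Popa exploit the second copy of $\omega_X$ before, not after, the ideal-sheaf twist).

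Part \emph{(b)} is farther from a proof, because the statement concerns the \emph{stable} pluricanonical map: when $\chi(\omega_X)=0$ the variety need not be of general type, and the claim is that $\varphi_m$ for $m\geq 6$ computes the Iitaka fibration, not that it is birational, so a separation-of-two-general-points scheme is not even the right target. The known proofs (Chen--Hacon \cite{CHpm,CHlinseries}) run through the Iitaka fibration itself, whose general fibre is birationally an \emph{abelian variety} by Kawamata's theorem, combined with Koll\'ar-type results on direct images of dualizing sheaves; one cannot ``apply part \emph{(a)} to the fibres,'' since those fibres have Kodaira dimension $0$ and part \emph{(a)} says nothing about them. Your induction instead descends through the Green--Lazarsfeld--Simpson fibrations attached to positive-dimensional components of $V^0(\omega_X)$ (Theorem \ref{thm:GL2} in the paper); these need bear no relation to the Iitaka fibration --- their fibres can themselves be of general type --- so the inductive step does not recombine into any statement about $\varphi_6$, and the factorization $6=2\cdot 3$ remains a heuristic rather than an argument.
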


For $\varphi_2$, we cannot expect to use $\chi(\omega_X)$ to control directly its birationality. For
example, if $C$ is a curve of genus $2$, then the bicanonical map of the product $C\times Y$ is
never birational. In fact, it is clear that any variety that admits a fibration whose general fibre
has non-birational $\varphi_2$ will have a non-birational bicanonical map. This should be
considered, at least at first glance, as the standard case for higher dimensional varieties. 

The following theorem provides geometric constraints for the non-birationality of the bicanonical
map (see Theorem \ref{thm:corfibr}). 
\begin{thmA}\label{thm:geomfibrA} Let $X$ be a smooth projective complex variety of maximal Albanese
dimension such that the bicanonical map is not birational. Then, the Albanese image of $X$ is
fibred by subvarieties of codimension at most $1$ of an abelian subvariety of $\Alb X$. The base of the fibration is also of maximal Albanese dimension.
\end{thmA}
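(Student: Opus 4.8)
The plan is to deduce Theorem A from the paper's main birationality criterion—the statement, recorded in the abstract, that $\gv(\omega_X) \geq 2$ forces $\varphi_2$ to be birational—by taking its contrapositive and then giving a purely geometric reading of the resulting inequality $\gv(\omega_X) \leq 1$. Since $X$ is of maximal Albanese dimension with non-birational bicanonical map, the criterion immediately yields $\gv(\omega_X) \leq 1$. By definition of the generic vanishing index as $\min_{i>0}\bigl(\codim_{\Pic^0(X)} V^i(\omega_X) - i\bigr)$, where $V^i(\omega_X) = \{P\in\Pic^0(X) : h^i(X,\omega_X\otimes P)\neq 0\}$, this means there is an integer $i>0$ and an irreducible component $V$ of $V^i(\omega_X)$ with $\codim_{\Pic^0(X)} V \leq i+1$. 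The whole problem then reduces to recovering the geometry of the Albanese image from such a low-codimension support locus.

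The second step is to invoke the structure theory of these loci. By Green--Lazarsfeld and Simpson, every component $V$ is a torsion translate $P_0 + \hat S$ of a subtorus $\hat S \subseteq \Pic^0(X)$; and when $\dim V>0$, the Green--Lazarsfeld derivative-complex analysis, sharpened by Ein--Lazarsfeld, attaches to $V$ an irregular fibration, i.e.\ a surjective morphism with connected fibres $f\colon X\to Y$ onto a normal projective $Y$ of maximal Albanese dimension with $0<\dim Y<\dim X$, such that $\hat S = f^*\Pic^0(Y)$ and $\dim V = q(Y)$. I would choose $V$ to realize the minimum defining $\gv(\omega_X)$, so that the attached fibration carries the geometry I want; the degenerate possibility of a point component (forcing $Y$ a point, so that $a(X)$ itself is of codimension $\leq 1$ in $\Alb X$) is absorbed by allowing the trivial fibration $X\to\operatorname{Spec}\CC$.

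The third step is the dimension bookkeeping converting the bound on $\codim V$ into the codimension-$\leq 1$ statement. Write $n=\dim X$, $d=\dim Y$, and let $F$ be a general fibre, so $\dim F = n-d$. Because $a\colon X\to\Alb X$ is generically finite and its restriction to a general fibre remains generically finite onto its image, $F$ is itself of maximal Albanese dimension with $\dim a(F)=n-d$. Functoriality of the Albanese gives a surjection $\Alb X\twoheadrightarrow\Alb Y$; let $K$ be the connected component of its kernel, an abelian subvariety with $\dim K = q(X)-q(Y)$, and observe that $a(F)$ lies in a translate of $K$. Since $\hat S = f^*\Pic^0(Y)$ gives $\codim_{\Pic^0(X)} V = q(X)-q(Y) = \dim K$, and since the Koll\'ar decomposition of $Rf_*\omega_X$ together with Hacon's generic vanishing for the sheaves $R^jf_*\omega_X$ identifies the index $i$ with the fibre dimension $n-d$, one obtains the clean identity
\[ \codim_K a(F) = \dim K - \dim a(F) = \bigl(q(X)-q(Y)\bigr) - (n-d) = \codim_{\Pic^0(X)} V - i \leq 1. \]
Thus, as $y$ varies over $Y$, the Albanese images $a(F_y)$ fibre $a(X)$ by subvarieties of codimension at most $1$ inside translates of the abelian subvariety $K\subseteq\Alb X$, with maximal-Albanese-dimensional base $Y$. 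This is exactly the assertion of Theorem A.

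The hard part will be the index/codimension dictionary used in the third step: verifying that the minimizing component $V$ is attached to a fibration for which the cohomological index $i$ equals the fibre dimension $n-d$. This is transparent when $\chi(\omega_Y)>0$, for then $V^0(\omega_Y)=\Pic^0(Y)$ and the top direct image $R^{\,n-d}f_*\omega_X=\omega_Y$ (Koll\'ar) forces $f^*\Pic^0(Y)\subseteq V^{\,n-d}(\omega_X)$, while Hacon's generic vanishing kills all higher indices on a general translate. When $\chi(\omega_Y)=0$, however, $V$ pulls back only a proper sublocus $V^0(\omega_Y)\subsetneq\Pic^0(Y)$, and one must either work with the fibration attached directly to $\hat S$ or iterate the construction over $Y$ until the base acquires positive Euler characteristic, checking at each stage that neither the index nor the kernel abelian subvariety erodes the codimension estimate. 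I expect the careful verification of this reduction, rather than any single cohomological computation, to be where the real work lies.
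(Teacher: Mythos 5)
Your skeleton is in fact the paper's: take the contrapositive of the birationality criterion (Theorem \ref{thm:fibr}) to get $\gv(\omega_X)\leq 1$, note $\gv(\omega_X)\geq 0$ from maximal Albanese dimension (Theorem \ref{thm:GL1}), pick a component $V$ of some $V^i(\omega_X)$ computing the index, and feed it into the Green--Lazarsfeld--Simpson subtorus theorem (Theorem \ref{thm:GL2}); this is exactly what the paper does in Theorem \ref{thm:corfibr}, quoting the proof of \cite[Thm.~B]{PPCdF} for the last step. But your third step has a genuine gap, and it sits exactly where you placed ``the hard part'': the index/codimension \emph{dictionary}. You assert the identities $\hat S=f^*\Pic^0(\tilde Y)$, $\dim V=q(\tilde Y)$ and $i=\dim X-\dim Y$, so as to convert $\codim V-i\leq 1$ into $\codim_K a(F)\leq 1$ by an \emph{equality}. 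None of these is guaranteed: the subtorus theorem gives only the containment $B\subseteq f^*\Pic^0 Y$ (so $\dim V\leq q(\tilde Y)$, possibly strict) and the bound $\dim Y\leq \dim X-i$ (so $i\leq \dim F$, possibly strict). Your proposed repair via Koll\'ar's decomposition and Hacon's generic vanishing proves, at best, $f^*\Pic^0(\tilde Y)\subseteq V^{\dim F}(\omega_X)$ when $\chi(\omega_{\tilde Y})>0$, which concerns a possibly different index than the minimizing one and still yields no equality; and you leave the case $\chi(\omega_{\tilde Y})=0$ explicitly unresolved. As written, the argument is incomplete.

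The good news is that the gap is illusory: the equalities are unnecessary, because both inequalities you do have point in the favorable direction. Choosing $V\subseteq V^i(\omega_X)$ with $\codim_{\Pic^0X}V-i=\gv(\omega_X)\leq 1$, and using only that $\dim a(F)=\dim F$ (a general fibre is not contained in the exceptional locus of the generically finite map $a$) and $\dim K=q(X)-q(\tilde Y)$ (surjectivity of $\Alb X\to \Alb \tilde Y$), one gets
\begin{equation*}
\codim_K a(F)=\bigl(q(X)-q(\tilde Y)\bigr)-\bigl(\dim X-\dim Y\bigr)\;\leq\;\bigl(q(X)-\dim V\bigr)-i=\codim_{\Pic^0X}V-i\;\leq\;1,
\end{equation*}
where the first inequality uses $\dim V\leq q(\tilde Y)$ and $i\leq \dim X-\dim Y$, both supplied by Theorem \ref{thm:GL2}{\it(b)}. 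This is precisely the bookkeeping in the proof of \cite[Thm.~B]{PPCdF} that the paper invokes, equivalently the inequality $q(X)-\dim X\leq q(\tilde Y)-\dim Y+\gv(\omega_X)$ of Theorem \ref{thm:corfibr}; your degenerate case ($V$ a point, $Y$ a point) is handled by the same display. So: delete the Koll\'ar/Hacon detour and the proposed iteration over $Y$, and replace the middle equality in your chain by the inequality above.
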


That is, $X$ admits a fibration onto a normal projective variety $Y$ with $0 \leq \dim Y <
\dim X$, such that any smooth model $\tilde Y$ of $Y$ is of maximal Albanese dimension and
\begin{equation*}q(X) - \dim X \leq q(\tilde Y) - \dim Y +1.
\end{equation*}
Hence, when $q(X)>\dim X+1$ implies the existence of an actual fibration, i.e.~$\dim Y>0$, whose
general fibre is mapped generically finite through the Albanese map of $X$ either onto a fixed
abelian subvariety of $\Alb X$, or onto a divisor of this fixed abelian subvariety. When $\dim Y=0$
the theorem simply says that the image of $X$ in $\Alb X$ has codimension at most $1$.

In particular, when $X$ does not admit any fibration and $q(X)>\dim X$, there is only one possible
case, i.e. $X$ is birationally equivalent to a theta-divisor of an indecomposable principally
polarized abelian variety (see {\cite[Thm. A]{BLNP}}). When $X$ does not admit any fibration and
$q(X)=\dim X$, there is only one known case of variety of general type and non-birational
bicanonical map: a double cover of a principally polarized abelian variety $(A,\Theta)$ branched
along a reduced divisor $B\in \abs{2\Theta}$. Is this the only case? The answer is affirmative in
the case of surfaces due to Ciliberto-Mendes~Lopes \cite[Thm 1.1]{CM}.

To deduce Theorem \ref{thm:geomfibrA} it is useful to consider the generic vanishing index introduced by
Pareschi--Popa in \cite[Def. 3.1]{PPCdF} 
\begin{equation*}
\gv(\omega_X)=\min_{i>0}\set{\codim_{\Pic^0X} V^i(\omega_X)-i},
\end{equation*}
where $V^i(\omega_X)=\set{\alpha\in\Pic^0X\st h^i(X,\omega_X\otimes\alpha)>0}$. As a consequence of
Generic Vanishing Theorem of Green--Lazarsfeld \cite[Thm. 1]{GL1}, we have that for any irregular
variety $1-\dim X\leq \gv(\omega_X)\leq q(X)-\dim X$.

Moreover, the negative values of $\gv(\omega_X)$ can be interpreted in terms of the dimension of the
generic fibre of the Albanese map (see Theorem \ref{thm:GL1}) and $X$ is a m.A.d variety if, and
only if, $\gv(\omega_X)\geq 0$. Due to the work of Pareschi--Popa \cite{PPCdF} we can interpret the
positive values of $\gv(\omega_X)$ in terms of the local properties of the Fourier-Mukai transform
of the structural sheaf (see Theorem \ref{thm:posit_gv}). They have also proved that the positive
values of $\gv(\omega_X)$ give a lower bound for the Euler characteristic $\chi(\omega_X)$ (see
Theorem \ref{thm:CdF}).

Using the generic vanishing index we have the following more synthetic result. 
\begin{thmA} Let $X$ be a smooth projective complex variety such that $\gv(\omega_X)\geq 2$. Then,
the rational map associated to $\omega_X^{2}\otimes \alpha$ is birational onto its image for every
$\alpha\in\Pic^0 X$.
\end{thmA}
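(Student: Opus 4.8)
The plan is to use the Pareschi--Popa theory of continuous global generation, together with the quantitative control on cohomological support loci afforded by the hypothesis $\gv(\omega_X)\geq 2$. Since $\gv(\omega_X)\geq 0$, the variety $X$ is of maximal Albanese dimension, so the Albanese map $a\colon X\to \Alb X$ is generically finite onto its image; consequently the rational map associated to $\omega_X^{2}\otimes\alpha$ is birational onto its image as soon as it is injective on a dense open subset of $X$. It therefore suffices to separate two general points $x_1\neq x_2$ of $X$, that is, to exhibit a section $s\in H^0(X,\omega_X^{2}\otimes\alpha)$ with $s(x_1)=0$ and $s(x_2)\neq 0$. The key idea is to factor $\omega_X^{2}\otimes\alpha=(\omega_X\otimes\beta)\otimes(\omega_X\otimes\alpha\beta^{-1})$ for an auxiliary $\beta\in\Pic^0 X$ and to build $s$ as a product of a section of the first factor vanishing at $x_1$ and a section of the second factor not vanishing at $x_2$; the product lies in $H^0(X,\omega_X^{2}\otimes\alpha)$ independently of $\beta$, so we are free to choose $\beta$ general.

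I then invoke two generation statements. First, $\gv(\omega_X)\geq 1$ says that $\codim V^i(\omega_X)>i$ for all $i>0$, i.e. $\omega_X$ is M-regular; by Pareschi--Popa's theorem that M-regular sheaves are continuously globally generated, for general $\beta$ the system $\abs{\omega_X\otimes\alpha\beta^{-1}}$ has a section not vanishing at $x_2$. Second, and this is where the stronger hypothesis enters, I claim that for general $x_1$ the sheaf $\omega_X\otimes\I_{x_1}$ is again M-regular, hence continuously globally generated; this produces, for general $\beta$, a section of $\omega_X\otimes\beta$ vanishing at $x_1$ but not at $x_2$. Multiplying the two sections yields the desired $s$, and by symmetry, together with base-point freeness at general points which follows from the same continuous global generation, the map is generically injective, hence birational onto its image.

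The main obstacle is the claim that $\omega_X\otimes\I_{x_1}$ is M-regular for general $x_1$. I would prove it from the short exact sequence
\begin{equation*}
0\longrightarrow \omega_X\otimes\I_{x_1}\longrightarrow \omega_X\longrightarrow \CC_{x_1}\longrightarrow 0,
\end{equation*}
whose associated long exact sequence (twisted by $\alpha\in\Pic^0X$) gives $V^i(\omega_X\otimes\I_{x_1})=V^i(\omega_X)$ for $i\geq 2$, while for $i=1$ one finds $V^1(\omega_X\otimes\I_{x_1})=V^1(\omega_X)\cup Z_{x_1}$, where $Z_{x_1}=\set{\alpha\in\Pic^0X \st x_1\in\Bs\abs{\omega_X\otimes\alpha}}$ is the locus of twists whose sections all vanish at $x_1$. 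Since $\gv(\omega_X)\geq 2$ forces $\codim V^i(\omega_X)\geq i+2$, M-regularity of $\omega_X\otimes\I_{x_1}$ (i.e. $\codim V^1\geq 2$) reduces exactly to the estimate $\codim_{\Pic^0X}Z_{x_1}\geq 2$ for general $x_1$. This codimension bound is the crux: it is the quantitative refinement of ``M-regular implies continuously globally generated'', and I would extract it from the local structure of the Fourier--Mukai transform of $\O_X$ controlled by $\gv(\omega_X)\geq 2$ (Theorem \ref{thm:posit_gv}), spreading the pointwise base loci over $X\times\Pic^0X$ and using the extra unit of generic vanishing to gain the second codimension. A secondary technical point, handled in the same circle of ideas, is that when $a$ has degree $>1$ one must work with the push-forward of $\omega_X\otimes\I_{x_1}$ from $X$, rather than twisting $a_*\omega_X$ by an ideal on $\Alb X$, so that the argument also separates two points lying in a common Albanese fibre.
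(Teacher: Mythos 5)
Your reduction is sound and in fact coincides with the paper's own birationality criterion (Lemma \ref{lem:birprov}, a variant of \cite[Thm. 4.13]{BLNP}): writing $\omega_X^{2}\otimes\alpha=(\omega_X\otimes\beta)\otimes(\omega_X\otimes\alpha\beta^{-1})$, using $V^i_a(\omega_X\otimes\I_x)=V^i_a(\omega_X)$ for $i\geq 2$ and $V^1_a(\omega_X\otimes\I_x)\subseteq V^1_a(\omega_X)\cup\B_a(x)$, and invoking the Pareschi--Popa results that M-regularity implies continuous global generation and that a product of CGG sheaves is globally generated off the exceptional locus. You have also correctly isolated the crux: everything hinges on the bound $\codim_{U_0}\B_a(x)\geq 2$ for general $x$.

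The genuine gap is that you do not prove this bound; you only assert it can be ``extracted from the local structure of the Fourier--Mukai transform of $\O_X$'' via Theorem \ref{thm:posit_gv}. That theorem (reflexivity of $\widehat{\O_X}$ when $\gv(\omega_X)\geq 2$) is a statement about the transform of $\O_X$, while $\B_a(x)$ is governed by the evaluation maps on sections of $\omega_X\otimes a^*\alpha$, i.e.\ by the transform of $a_*(\omega_X\otimes\I_x)$; there is no direct local implication from one to the other, and ``spreading the base loci and gaining one codimension'' is a plan, not an argument. Indeed, in the paper this step is not local at all and occupies all of Sections 4--5: one argues by contradiction, so the failure of the bound produces a divisor $\overline\Y\subset X\times\Pic^0X$ of relative base points (Proposition-Definition \ref{pr-df:birprov}); the fixed divisors $F_\alpha$ then move along a nontrivial factor, forcing a decomposition $\Alb X\cong B\times C$ with $\dim C>0$ (Lemma \ref{lem:propKerf}) and an explicit description of the ``half'' Poincar\'e bundle in terms of $\overline\Y$ (Lemma \ref{lem:halfpoinc}); transforming the resulting short exact sequence yields \eqref{eq:fibrshort}, and only then do Proposition \ref{prop:IrrMukai} ($R^d\Phi_P\omega_X\cong k(\hat 0)$) and the duality of Corollary \ref{cor:duality} show that $\T\neq 0$ and that some $R^{e-1}\Phi_P\omega_X$ is supported in codimension exactly $e\geq 2$, i.e.\ $\gv(\omega_X)\leq 1$. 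Note also that reflexivity of $\widehat{\O_X}$ is never what is used along the way --- only torsion-freeness (from $\gv(\omega_X)\geq 1$) enters, and the hypothesis $\gv(\omega_X)\geq 2$ appears solely at the final contradiction. So your proposal reproduces the (comparatively easy) criterion but omits the actual content of the theorem.
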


Theorem \ref{thm:geomfibrA} is deduced from this result by an argument of Pareschi-Popa. On the
other hand, this result (see Theorem \ref{thm:fibr}) is proved using a birationality criterion (see
Lemma \ref{lem:birprov}) that is a slight modification of \cite[Thm. 4.13]{BLNP}.

For curves, $\gv(\omega_C)\geq 2$ is equivalent to $g(C) \geq 3$. For surfaces, $\gv(\omega_S)\geq 2$ is equivalent to suppose that $q(S)\geq 4$ and does not admit an irregular fibration to a curve of genus $\leq q(S)-3$ (see Example \ref{ex:exemples}). \\ 
 
\noindent{\bf Acknowledgments.} This is part of my Ph.D. thesis. I would like to thank my advisors,
M.A.~Barja and J.C.~Naranjo, for their valuable suggestions. I am also grateful to G.~Pareschi for
helpful conversations during my Ph.D studies. I would like to thank 
also R.~Pardini for her useful comments.

\section{Generalized Fourier-Mukai transform}
$X$ will be a smooth projective variety over an algebraically closed field $k$ (from section
\ref{sub:char0} on, we will restrict to $k=\CC$). It will be equipped with a morphism $a:X\to A$ to
a non-trivial abelian variety $A$, in particular, $X$ will be irregular. Let $\P$ be a Poincar\'e
line bundle on $A\times \Pic^0A$. We will denote
\begin{equation}\label{eq:PoicPa} 
P_a=(a\times \id_{\Pic^0X})^*\P,
\end{equation}
the \emph{induced Poincar\'e line bundle} in $X\times \Pic^0A$. When $a=\alb$, the Albanese map of
$X$,  then the map $\alb^*$ identifies $\Pic^0(\Alb X)$ to $\Pic^0X$ and the line bundle $P_{\alb}$
will be simply denoted by $P$.

Letting $p$ and $q$ the two projections of $X\times \Pic^0 A$, we consider the left exact functor
$\Phi_{P_a} \F=q_*(p^*\F\otimes P_a)$, and its right derived functors 
\begin{equation}\label{eq:FMPa} R^i\Phi_{P_a} \F= R^iq_*(p^*\F\otimes P_a).
\end{equation}
Sometimes we will have to consider  the analogous derived functor 
$R^i\Phi_{P_a^{-1}}\F$ as well. By the Seesaw Theorem \cite[Cor. 6, pg. 54]{MAV}, $\P^{-1}=
(1_A\times
(-1)_{\Pic^0 A})^*\P$, so 
\begin{equation}\label{eq:FMPdual} R^i\Phi_{P_a^{-1}}\F= (-1_{\Pic^0 A})^*R^i\Phi_{P_a}\F
\qquad\text{for any }i.
\end{equation}
Given a coherent sheaf $\F$ on $X$, its \emph{i-th cohomological support locus  with respect to $a$}
is
\begin{equation*}V^i_a(\F)=\set{\alpha \in \Pic^0A \st h^i(\F\otimes a^*\alpha)>0}\end{equation*}
Again, when $a$ is the Albanese map of $X$, we will omit the subscript, simply writing
$V^i(\F)$. 
By base change, these loci contain the set-theoretical support of $R^i\Phi_{P_a}\F$, i.e.
$\supp R^i\Phi_{P_a}\F\subseteq V^i_a(\F)$.

A way to measure the size of all the $V^i_a(\F)$'s is provided by the following invariant introduced
by Pareschi--Popa.
\begin{deff}[{\cite[Def. 3.1]{PPCdF}}]\label{def:gvindex} Given a coherent sheaf $\F$ on $X$, the
\emph{generic vanishing index} of $\F$ (with respect to $a$) is
\begin{equation*}\gv_a(\F):=\min_{i>0}\set{\codim_{\Pic^0 A}V^i_a(\F)-i}.\end{equation*}
By convention we define $\gv_a(\F)=\infty$, when $V^i_a(\F)=\emptyset$ for every $i>0$. When $a$ is
the Albanese map of $X$, we will omit the subscript, simply writing 
$\gv(\F)$.
\end{deff}
By base change (see \cite[Lem. 2.1]{PPCdF}) it is easy to see that $\gv_a(\F)$ can be also defined
as the  $\min_{i>0}\set{\codim_{\Pic^0 A}\supp R^i\Phi_{P_a}\F - i}$.

\section{Generic vanishing index of the canonical sheaf}

\subsection{Relations between $\gv(\omega_X)$ and the Fourier-Mukai transform of $\O_X$}
Here we specialize some general results of Pareschi--Popa \cite{PPCdF,PPGVsheaves} to the canonical
sheaf of a smooth projective variety  of dimension $d$. Some of these results were previously
obtained by Hacon (see \cite{Happroach}).

The negative values of the $\gv$-index are related with the vanishing of the lowest cohomologies of the
Fourier-Mukai transform of its Grothendieck dual. In the case of $\omega_X$ this can be stressed
simply as:
\begin{thm}[{\cite[Thm. 2.2]{PPCdF}}]\label{thm:GVvsWIT} 
The following are equivalent,
\begin{enumerate*}
 \item $\gv_a(\omega_X)\geq -e$ for $e\geq 0$;
 \item $R^i\Phi_{P_a}\O_X =0$ for all $i\neq d-e,\ldots,d$.
\end{enumerate*}
\end{thm}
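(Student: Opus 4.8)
The plan is to deduce the equivalence from Grothendieck--Serre duality together with the standard dictionary on the smooth variety $\Pic^0 A$ between the codimension of the supports of $\cExt$-sheaves and the vanishing range of a complex. Writing $p,q$ for the two projections of $X\times\Pic^0 A$, the morphism $q$ is smooth and proper of relative dimension $d$ with relative dualizing complex $p^*\omega_X[d]$. Applying relative duality to $R\Phi_{P_a}\O_X=Rq_*P_a$ and using $R\mathcal{H}om(P_a,p^*\omega_X)=P_a^{-1}\otimes p^*\omega_X$ gives
\begin{equation*}
R\mathcal{H}om(R\Phi_{P_a}\O_X,\O_{\Pic^0A})\cong R\Phi_{P_a^{-1}}\omega_X[d]\cong(-1_{\Pic^0A})^*R\Phi_{P_a}\omega_X[d],
\end{equation*}
the last isomorphism being \eqref{eq:FMPdual}. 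Since $(-1_{\Pic^0A})^*$ preserves codimension, this identifies the $j$-th cohomology sheaf of the left-hand side with $R^{j+d}\Phi_{P_a}\omega_X$ up to the involution, so condition \emph{(a)} becomes a statement about the codimensions of the supports of the cohomology sheaves of the dual complex. Recall from the remark following Definition \ref{def:gvindex} that $\gv_a(\omega_X)\geq -e$ is equivalent to $\codim\supp R^i\Phi_{P_a}\omega_X\geq i-e$ for every $i>0$.

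Both implications then follow by running the hyper-$\cExt$ spectral sequence $\cExt^p(\mathcal{H}^{-q},\O_{\Pic^0A})\Rightarrow \mathcal{H}^{p+q}(R\mathcal{H}om(-,\O_{\Pic^0A}))$ and invoking two elementary facts on the smooth variety $\Pic^0A$: for a coherent sheaf $\F$ one has $\codim\supp\cExt^p(\F,\O)\geq p$ for all $p$, and $\cExt^p(\F,\O)=0$ for $p<\codim\supp\F$. For \emph{(b)}$\Rightarrow$\emph{(a)}, assume $R\Phi_{P_a}\O_X$ is concentrated in degrees $[d-e,d]$; in the spectral sequence computing its dual, every nonzero term $\cExt^p(R^{-q}\Phi_{P_a}\O_X,\O)$ has $-q\geq d-e$ and support of codimension at least $p=n-q\geq n+d-e$ in total degree $n$, so the abutment has $\codim\supp R^{n+d}\Phi_{P_a}\omega_X\geq(n+d)-e$, which is exactly \emph{(a)}. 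For \emph{(a)}$\Rightarrow$\emph{(b)}, apply biduality to recover $R\Phi_{P_a}\O_X$ as the dual of $(-1_{\Pic^0A})^*R\Phi_{P_a}\omega_X[d]$ and run the spectral sequence the other way: the codimension hypothesis forces $\cExt^p((-1)^*R^i\Phi_{P_a}\omega_X,\O)=0$ unless $p\geq i-e$, so for $i>0$ every contributing term sits in total degree $p+q\geq d-e$, giving $R^m\Phi_{P_a}\O_X=0$ for $m<d-e$ (the range $m>d$ being automatic since $q$ has relative dimension $d$).

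The one point requiring care is the boundary sheaf $R^0\Phi_{P_a}\omega_X$, which is not constrained by the $\gv$-hypothesis (that only governs $i>0$) and may be supported in codimension $0$; however, its $\cExt$-contributions all land in total degrees $\geq d$, hence outside the range $m<d-e$ because $e\geq 0$, so it is harmless. Beyond this, I expect the essential work to be pinning down the duality isomorphism above --- in particular the identification of the relative dualizing complex and the $(-1_{\Pic^0A})^*$ twist --- and the careful index bookkeeping in the two spectral sequences; the two commutative-algebra facts are standard and the remainder is formal.
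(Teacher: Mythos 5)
Your proof is correct and follows essentially the same route as the source from which the paper quotes this statement without proof (\cite[Thm.~2.2]{PPCdF}): Grothendieck--Verdier duality identifying the dual complex of $R\Phi_{P_a}\O_X$ with $(-1_{\Pic^0 A})^*R\Phi_{P_a}\omega_X[d]$, combined with the standard dictionary between codimension of supports of $\cExt$ sheaves and vanishing ranges, run through the hyperext spectral sequence in both directions. Your handling of the unconstrained boundary term $R^0\Phi_{P_a}\omega_X$ is exactly the right point of care, and your duality isomorphism, read at the level of cohomology sheaves when the transform is concentrated in degree $d$, is precisely the paper's Corollary~\ref{cor:duality}.
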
 
 Hence, when $\gv_a(\omega_X)\geq 0$, $R^i\Phi_{P_a}\O_X=0$ for all $i\neq d$, and we usually denote
\begin{equation*}\widehat{\O_X}=R^d\Phi_{P_a} \O_X.\end{equation*}
Note that, in this case, $H^i(X,\omega_X\otimes a^*\alpha)=0$ for all $i>0$ and general $\alpha\in
\Pic^0 A$. Therefore, by deformation-invariance of $\chi$, the generic value of
$h^0(X,\omega_X\otimes a^*\alpha)$ equals $\chi(\omega_X)$, in particular $\chi(\omega_X)\geq 0$.
Since, by base-change, the fibre of $\widehat{\O_X}$ at a general point $\alpha\in \Pic^0 A$ is
isomorphic to $H^{d}(X,a^*\alpha)\cong H^0(X,\omega_X \otimes a^*\alpha^{-1})^*$, the (generic) rank
of $\widehat{\O_X}$ is $\rk \widehat{\O_X}=\chi(\omega_X)$.

From Grothendieck-Verdier duality \cite[Thm. 4.3.1]{conrad} and Theorem \ref{thm:GVvsWIT} it follows
that,
\begin{cor}[{\cite[Rem. 3.13]{PPGVsheaves}}]\label{cor:duality}  If $\gv_a(\omega_X)\geq 0$  then
$\cExt^i_{\O_{\Pic^0 A}}((-1_{\Pic^0 A})^*\widehat{\O_X},\O_{\Pic^0 A})\cong 
R^i\Phi_{P_a}\omega_X$. 
\end{cor}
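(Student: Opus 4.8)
The plan is to deduce the isomorphism from relative Grothendieck--Verdier duality applied to the projection $q\colon X\times\Pic^0 A\to\Pic^0 A$, after recording that the hypothesis forces the derived pushforward of $P_a$ to be concentrated in a single degree. First I would invoke Theorem \ref{thm:GVvsWIT} with $e=0$: since $\gv_a(\omega_X)\geq 0$, we have $R^i\Phi_{P_a}\O_X=0$ for all $i\neq d$, so the total derived functor $\mathbf{R}\Phi_{P_a}\O_X=\mathbf{R}q_*P_a$ is concentrated in degree $d$ and equals $\widehat{\O_X}[-d]$. This is the point where the generic vanishing hypothesis enters, and it is exactly what will let the abstract derived-category identity collapse into a sheaf-level statement.

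Next I would apply Grothendieck--Verdier duality \cite[Thm. 4.3.1]{conrad} to the smooth proper morphism $q$, whose relative dualizing complex is $q^!\O_{\Pic^0 A}\cong p^*\omega_X[d]$ because $q$ has relative dimension $d$ with fibre $X$. Taking $E=P_a$ and $F=\O_{\Pic^0 A}$ in
\[
\mathbf{R}q_*\,\mathbf{R}\mathcal{H}om(E,q^!F)\cong\mathbf{R}\mathcal{H}om(\mathbf{R}q_*E,F),
\]
the left-hand side becomes $\mathbf{R}q_*(P_a^{-1}\otimes p^*\omega_X)[d]=\mathbf{R}\Phi_{P_a^{-1}}\omega_X[d]$, using that $P_a$ is invertible so $\mathbf{R}\mathcal{H}om(P_a,p^*\omega_X[d])=P_a^{-1}\otimes p^*\omega_X[d]$; meanwhile the right-hand side becomes $\mathbf{R}\mathcal{H}om(\widehat{\O_X}[-d],\O_{\Pic^0 A})=\mathbf{R}\mathcal{H}om(\widehat{\O_X},\O_{\Pic^0 A})[d]$ by the previous step. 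Cancelling the shift $[d]$ yields $\mathbf{R}\Phi_{P_a^{-1}}\omega_X\cong\mathbf{R}\mathcal{H}om(\widehat{\O_X},\O_{\Pic^0 A})$.

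It then remains to remove the dual kernel. Using the derived version of \eqref{eq:FMPdual}, namely $\mathbf{R}\Phi_{P_a^{-1}}\omega_X\cong(-1_{\Pic^0 A})^*\mathbf{R}\Phi_{P_a}\omega_X$, and applying the involution $(-1_{\Pic^0 A})^*$, which commutes with $\mathbf{R}\mathcal{H}om$ since it is an isomorphism, I obtain $\mathbf{R}\Phi_{P_a}\omega_X\cong\mathbf{R}\mathcal{H}om((-1_{\Pic^0 A})^*\widehat{\O_X},\O_{\Pic^0 A})$. Taking the $i$-th cohomology sheaf of both complexes gives $R^i\Phi_{P_a}\omega_X\cong\cExt^i_{\O_{\Pic^0 A}}((-1_{\Pic^0 A})^*\widehat{\O_X},\O_{\Pic^0 A})$, as desired; the right-hand side is genuinely the sheaf $\cExt$ because $\widehat{\O_X}$ is a single coherent sheaf and $\Pic^0 A$ is smooth with trivial dualizing sheaf.

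The main obstacle I anticipate is the bookkeeping in the duality step: correctly identifying the relative dualizing complex $q^!\O_{\Pic^0 A}\cong p^*\omega_X[d]$, tracking the two shifts by $[d]$ so that they cancel, and justifying that $\mathbf{R}\mathcal{H}om(-,\O_{\Pic^0 A})$ reduces on cohomology to the sheaf $\cExt$. None of these are deep, but the identifications must be made in the correct order, and it is essential that Theorem \ref{thm:GVvsWIT} has already reduced $\mathbf{R}q_*P_a$ to a shifted sheaf, since otherwise the final statement would involve a spectral sequence rather than a direct isomorphism of $\cExt$ sheaves.
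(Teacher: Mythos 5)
Your proposal is correct and takes essentially the same approach as the paper: the corollary is derived there from exactly the two ingredients you use, namely Theorem \ref{thm:GVvsWIT} (with $e=0$, so that $\mathbf{R}\Phi_{P_a}\O_X$ collapses to $\widehat{\O_X}[-d]$) and Grothendieck--Verdier duality \cite[Thm. 4.3.1]{conrad}, together with \eqref{eq:FMPdual} to trade $P_a^{-1}$ for the pullback by $(-1_{\Pic^0 A})$. Your bookkeeping of the relative dualizing complex $p^*\omega_X[d]$, the shift cancellation, and the reduction to sheaf-level $\cExt$'s is accurate, so nothing is missing.
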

The following result of Pareschi--Popa gives a dictionary between the positive values of
$\gv_a(\omega_X)$ and the local properties of the Fourier-Mukai transform of $\widehat{\O_X}$. 
\begin{thm}[{\cite[Cor. 3.2]{PPCdF}}] \label{thm:posit_gv}
Assume that $\gv_a(\omega_X)\geq 0$. Then, 
\begin{equation}
\gv_a(\omega_X)\geq m\;\text{ if, and only if, }\;\widehat{\O_X}\text{ is a }m\text{-syzygy sheaf.}
\end{equation}
In particular, $\gv_a(\omega_X)\geq 1$ is equivalent to $\widehat{\O_X}$ being torsion-free and
$\gv_a(\omega_X)\geq 2$ to $\widehat{\O_X}$ being reflexive.
\end{thm}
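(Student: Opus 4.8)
The plan is to recast both sides of the asserted equivalence as a single system of codimension estimates on the sheaves $\cExt^i_{\O_{\Pic^0 A}}(\widehat{\O_X},\O_{\Pic^0 A})$, and then to appeal to a purely homological characterization of $m$-syzygy sheaves on a smooth variety.

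First I would unwind the left-hand side. By the base-change reformulation of the generic vanishing index recorded just after Definition \ref{def:gvindex}, one has $\gv_a(\omega_X)=\min_{i>0}\set{\codim_{\Pic^0 A}\supp R^i\Phi_{P_a}\omega_X - i}$. Hence $\gv_a(\omega_X)\geq m$ is equivalent to the inequalities
\begin{equation*}
\codim_{\Pic^0 A}\supp R^i\Phi_{P_a}\omega_X \geq i+m \qquad\text{for all } i>0.
\end{equation*}
Since $\gv_a(\omega_X)\geq 0$ holds by hypothesis, Corollary \ref{cor:duality} identifies $R^i\Phi_{P_a}\omega_X$ with $\cExt^i_{\O_{\Pic^0 A}}((-1_{\Pic^0 A})^*\widehat{\O_X},\O_{\Pic^0 A})$ for every $i$. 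As $(-1_{\Pic^0 A})^*$ is an automorphism of $\Pic^0 A$, it preserves codimensions of supports, so the displayed condition is in turn equivalent to
\begin{equation*}
\codim_{\Pic^0 A}\supp \cExt^i_{\O_{\Pic^0 A}}(\widehat{\O_X},\O_{\Pic^0 A}) \geq i+m \qquad\text{for all } i>0.
\end{equation*}
In this way the statement reduces to a local question on the smooth variety $\Pic^0 A$: for a coherent sheaf $\mathcal{G}$, do these bounds hold if and only if $\mathcal{G}$ is an $m$-syzygy sheaf?

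This last equivalence is the homological core of the argument, and is where I expect the real difficulty to lie. I would deduce it from the Evans--Griffith theory of syzygies: localizing at each point, a finitely generated module over a regular local ring is an $m$-th syzygy exactly when its Ext modules vanish in the appropriate low codimensions, which globalizes to the displayed bounds. The forward implication is the gentler one: from a partial locally free resolution $0\to\mathcal{G}\to E_0\to\cdots\to E_{m-1}$ one splits off short exact sequences, computes $\cExt^i(\mathcal{G},\O_{\Pic^0 A})$ by dimension shifting, and controls their supports pointwise through the Auslander--Buchsbaum formula. The converse, namely manufacturing the syzygy embedding out of the codimension estimates alone, is the genuine obstacle; it needs the full Auslander--Bridger/Evans--Griffith machinery (passing to the transpose and building reflexivity one step at a time), and I would either reproduce this local argument verbatim or, as Pareschi--Popa do, invoke it as a known homological fact. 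Specializing to $m=1$ then recovers torsion-freeness and to $m=2$ recovers reflexivity, yielding the two stated consequences.
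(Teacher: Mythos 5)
Your proposal is correct and matches the intended proof: the paper states this result as a direct quotation of \cite[Cor. 3.2]{PPCdF} without reproving it, and Pareschi--Popa's argument is precisely your reduction --- using the base-change description of $\gv_a$ and the duality isomorphism of Corollary \ref{cor:duality} to translate $\gv_a(\omega_X)\geq m$ into the bounds $\codim_{\Pic^0 A}\supp \cExt^i_{\O_{\Pic^0 A}}(\widehat{\O_X},\O_{\Pic^0 A}) \geq i+m$ for $i>0$, followed by the Auslander--Bridger/Evans--Griffith local characterization of $m$-th syzygy modules over a regular ring. Your identification of where the real homological content lies (the converse direction of that characterization) is also accurate, and invoking it as a known fact is exactly what the cited source does.
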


Using the Evans--Griffith Syzygy Theorem and the previous theorem, Pareschi--Popa obtain the
following bound on the Euler holomorphic characteristic that generalizes to higher dimensions the
Castelnuovo-de~Franchis inequality.
\begin{thm}[{\cite[Thm. 3.3]{PPCdF}}]\label{thm:CdF} Assume that $\gv_a(\omega_X)\geq 0$. Then, 
$\chi(\omega_X) \geq \gv_a(\omega_X)$. 
\end{thm}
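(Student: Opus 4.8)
The plan is to combine the syzygy-theoretic description of the positive $\gv$-index furnished by Theorem \ref{thm:posit_gv} with the Evans--Griffith Syzygy Theorem, using throughout that the generic rank of $\widehat{\O_X}$ equals $\chi(\omega_X)$. Set $m=\gv_a(\omega_X)$. If $m=0$ there is nothing to prove, since $\chi(\omega_X)=\rk\widehat{\O_X}\geq 0$. I may therefore assume $1\leq m<\infty$; the degenerate case $m=\infty$ corresponds to $\widehat{\O_X}$ being locally free (as the next step shows) and falls outside the scope of the inequality, so it is set aside. By Theorem \ref{thm:posit_gv}, the hypothesis $\gv_a(\omega_X)\geq m$ tells us precisely that $\widehat{\O_X}$ is an $m$-syzygy sheaf.

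The crux of the argument is to verify that $\widehat{\O_X}$ is \emph{not} locally free, so that the syzygy theorem yields nontrivial information. Suppose for contradiction that it were. Then $(-1_{\Pic^0 A})^*\widehat{\O_X}$ would also be locally free, whence $\cExt^i_{\O_{\Pic^0 A}}((-1_{\Pic^0 A})^*\widehat{\O_X},\O_{\Pic^0 A})=0$ for every $i>0$; by Corollary \ref{cor:duality} this forces $R^i\Phi_{P_a}\omega_X=0$ for all $i>0$. Invoking the base-change description $\gv_a(\omega_X)=\min_{i>0}\set{\codim_{\Pic^0 A}\supp R^i\Phi_{P_a}\omega_X-i}$ recorded after Definition \ref{def:gvindex}, the vanishing of all these higher direct images gives $\gv_a(\omega_X)=\infty$, contradicting $m<\infty$. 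Hence $\widehat{\O_X}$ fails to be locally free at some point of $\Pic^0 A$.

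Finally I would apply the Evans--Griffith Syzygy Theorem at such a point. Since $A$ is an abelian variety, $\Pic^0 A$ is smooth over $k$, so its local rings are regular local rings containing a field; the Evans--Griffith theorem then asserts that a finitely generated $m$-th syzygy module over such a ring that is not free has rank at least $m$. Choosing a point $p\in\Pic^0 A$ at which $\widehat{\O_X}$ is not free, the stalk $(\widehat{\O_X})_p$ is a non-free $m$-th syzygy module, hence of rank $\geq m$. Because $\Pic^0 A$ is integral, this rank coincides with the generic rank of $\widehat{\O_X}$, which is $\chi(\omega_X)$. Therefore $\chi(\omega_X)=\rk\widehat{\O_X}\geq m=\gv_a(\omega_X)$, as desired.

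The main obstacle is the careful application of the Evans--Griffith theorem: one must check that its hypotheses are met by the local rings of $\Pic^0 A$, that the notion of $m$-syzygy sheaf is exactly the sheaf-theoretic counterpart of the $m$-th syzygy module appearing in its statement, and that the local non-free rank is correctly identified with the global generic rank $\chi(\omega_X)$. The dichotomy step---ruling out local freeness via Corollary \ref{cor:duality}---is the other delicate point, since it is precisely what prevents the syzygy bound from being vacuous.
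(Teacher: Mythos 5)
Your proof is correct and takes essentially the same approach as the paper, which does not prove the statement itself but cites \cite[Thm. 3.3]{PPCdF} and indicates exactly this strategy: combine the syzygy-sheaf characterization of Theorem \ref{thm:posit_gv} with the Evans--Griffith Syzygy Theorem, using $\rk\widehat{\O_X}=\chi(\omega_X)$. Your dichotomy step ruling out local freeness of $\widehat{\O_X}$ via Corollary \ref{cor:duality} (consistent with the observation in Remark \ref{rem:CdF} that $\gv_a(\omega_X)<\infty$ when $a$ is non-trivial) correctly supplies the non-freeness needed to make Evans--Griffith non-vacuous.
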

\begin{rem}\label{rem:CdF} In fact, the theorem of Pareschi--Popa is more general, namely that for
any coherent sheaf $\F$ if $\infty>\gv_a(\F)\geq 0$, then $\chi(\F)\geq \gv_a(\F)$. As a
consequence, we easily obtain that for any non-zero coherent sheaf $\F$, $\gv_a(\F)\geq 1\Rightarrow
\chi(\F)\geq 1$. Observe also that if $a$ is non-trivial, we always have $\gv_a(\omega_X)<\infty$.
\end{rem}

\subsection{Top Fourier-Mukai transform of the canonical sheaf}
In the case of abelian varieties (or complex torus) the following result is well-known and crucial
in the proof of the Mukai Equivalence Theorem \cite[Thm 2.2]{Mdual}. We will need it in the proof of
Theorem \ref{thm:fibr}.
\begin{prop}[{\cite[Prop. 6.1]{BLNP}}]\label{prop:IrrMukai} If $a^*:\Pic^0A\to \Pic^0X$ is an
embedding, then \begin{equation*}R^d\Phi_{P_a} \omega_X\cong k(\hat 0). 
\end{equation*}
\end{prop}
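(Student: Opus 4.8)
The plan is to show that the coherent sheaf $\mathcal G:=R^d\Phi_{P_a}\omega_X=R^dq_*(p^*\omega_X\otimes P_a)$ is supported only at $\hat0$ with one–dimensional fibre there, and then to rule out any infinitesimal thickening at $\hat0$. Note at the outset that one cannot simply invoke Corollary~\ref{cor:duality}, since the embedding hypothesis does not force $\gv_a(\omega_X)\geq0$. I would begin with support and fibres. Because the fibres of $q$ have dimension $d$, the top direct image $R^dq_*$ is right exact and commutes with arbitrary base change; hence for every $\alpha\in\Pic^0A$ the base-change map and Serre duality give
\[
\mathcal G\otimes k(\alpha)\;\cong\;H^d(X,\omega_X\otimes a^*\alpha)\;\cong\;H^0(X,a^*\alpha^{-1})^\vee .
\]
Now $a^*\alpha^{-1}\in\Pic^0X$, and a numerically trivial line bundle on a smooth projective variety has a nonzero section if and only if it is trivial; as $a^*$ is injective this happens exactly when $\alpha=\hat0$. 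Thus $\mathcal G\otimes k(\alpha)=0$ for $\alpha\neq\hat0$, while $\mathcal G\otimes k(\hat0)\cong H^d(X,\omega_X)\cong k$. Writing $R=\O_{\Pic^0A,\hat0}$ with maximal ideal $\mathfrak m$, Nakayama's lemma yields $\mathcal G_{\hat0}\cong R/I$ for an $\mathfrak m$-primary ideal $I\subseteq\mathfrak m$, and everything reduces to proving $I=\mathfrak m$.

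For this I would test $\mathcal G$ against each tangent direction. Fix $0\neq v\in T_{\hat0}\Pic^0A=H^1(A,\O_A)$ and let $u\colon S_\epsilon=\operatorname{Spec}k[\epsilon]/(\epsilon^2)\to\Pic^0A$ be the corresponding morphism. Since $p^*\omega_X\otimes P_a$ is flat over $\Pic^0A$ and $R^dq_*$ commutes even with the non-flat base change $u$, we get $u^*\mathcal G\cong H^d(X,\mathcal L_\epsilon)$, where $\mathcal L_\epsilon$ is the restriction of $p^*\omega_X\otimes P_a$ to $X\times S_\epsilon$. The restriction of $P_a$ to $X\times\{\hat0\}$ is $a^*\O_A=\O_X$, and its first-order deformation in the direction $v$ is classified by $a^*v\in H^1(X,\O_X)$; hence, as sheaves of $\O_X$-modules, $\mathcal L_\epsilon$ sits in an extension
\[
0\longrightarrow\omega_X\longrightarrow\mathcal L_\epsilon\longrightarrow\omega_X\longrightarrow0
\]
of class $a^*v$, whose connecting map $H^i(X,\omega_X)\to H^{i+1}(X,\omega_X)$ is cup product with $a^*v$. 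Taking the long exact sequence and using $H^{d+1}(X,\omega_X)=0$, I obtain
\[
\dim_k u^*\mathcal G=\dim_k H^d(X,\mathcal L_\epsilon)=2-\rk\!\big(\cup\,a^*v\colon H^{d-1}(X,\omega_X)\to H^d(X,\omega_X)\big).
\]
Under Serre duality this cup product is the evaluation pairing $H^1(X,\O_X)^\vee\to k$ at $a^*v$, so it has rank $1$ as soon as $a^*v\neq0$; and $a^*v\neq0$ because $a^*$ being an embedding forces its differential $a^*\colon H^1(A,\O_A)\to H^1(X,\O_X)$ to be injective. Therefore $\dim_k u^*\mathcal G=1$ for every $v\neq0$. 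As $u^*\mathcal G=(R/I)\otimes_R k[\epsilon]/(\epsilon^2)$ has length $2$ precisely when $v$ annihilates $I\bmod\mathfrak m^2$, this forces $I+\mathfrak m^2=\mathfrak m$, whence $I=\mathfrak m$ by Nakayama and $\mathcal G\cong k(\hat0)$.

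The main obstacle is exactly this last step. The support-and-fibre computation only shows that $\mathcal G$ is a cyclic $R$-module with one-dimensional residue fibre, which by itself does not exclude a fat-point structure such as $R/\mathfrak m^2$. The real content is that the family $\{a^*\alpha\}$ has vanishing ``derivative of top cohomology'', the relevant cup products landing in $H^{d+1}(X,\omega_X)=0$, so that the top transform can carry no nilpotents; the embedding hypothesis enters precisely to guarantee that no tangent direction of $\Pic^0A$ is killed, i.e.\ that $\cup\,a^*v\neq0$ for every $v\neq0$.
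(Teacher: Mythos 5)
Your proof is correct. There is nothing internal to compare it with: the paper does not prove this proposition but imports it wholesale from \cite[Prop.~6.1]{BLNP}, and your argument is essentially the one given in that reference. Both pillars of your write-up check out: since the fibres of $q$ have dimension $d$, the functor $R^dq_*$ indeed commutes with arbitrary (non-flat, non-reduced) base change, so Serre duality plus injectivity of $a^*$ on closed points pins down the reduced support as $\set{\hat 0}$ and the fibre there as $H^d(X,\omega_X)\cong k$, making the transform the structure sheaf of a zero-dimensional subscheme; and reducedness is correctly reduced to the assertion that for every $0\neq v\in H^1(A,\O_A)$ the cup product with $a^*v$ maps $H^{d-1}(X,\omega_X)$ \emph{onto} $H^d(X,\omega_X)$, which follows from the perfectness of the Serre pairing because the embedding hypothesis forces the differential $a^*\colon H^1(A,\O_A)\to H^1(X,\O_X)$ to be injective. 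The closing commutative algebra (length $1$ over every tangent direction gives $I+\mathfrak{m}^2=\mathfrak{m}$, hence $I=\mathfrak{m}$ by Nakayama) is also right, and your opening caveat is apt: Corollary~\ref{cor:duality} is unavailable here since no generic vanishing hypothesis is made, so the dual-numbers computation is genuinely needed.
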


\subsection{Generic vanishing theorem of Green--Lazarsfeld}\label{sub:char0}
The name of the $\gv$-index comes from the well-known Generic Vanishing Theorem of
Green--Lazarsfeld. As other general vanishing theorems, it requires $\ch k=0$ so from now on we will
restrict ourselves to the case $k=\CC$.
Basically, the following theorem is \cite[Thm. 1]{GL1}. The converse implication was proven
independently in \cite[Thm. B]{LP} and \cite[Prop. 2.7]{BLNP}. 
\begin{thm}\label{thm:GL1} For any $e>0$, the following are equivalent:
\begin{enumerate*}
\item the generic fibre of $a\colon X\to A$ has dimension $e$,
\item $\gv_a(\omega_X)= -e$.
\end{enumerate*}
Moreover $\gv_a(\omega_X)\geq 0$ if, and only if, $a\colon X\to A$ is generically finite onto its
image. 
\end{thm}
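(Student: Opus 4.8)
The plan is to reduce everything, via Theorem \ref{thm:GVvsWIT}, to locating the lowest degree in which the Fourier--Mukai complex $R^\bullet\Phi_{P_a}\O_X$ is non-zero, and then to pin this degree down by combining the Green--Lazarsfeld vanishing with an elementary non-vanishing produced by pulling back holomorphic forms from $A$. Write $d=\dim X$, $m=\dim a(X)$, and $e=d-m$ for the dimension of the general fibre of $a$. It suffices to prove the single statement that $\gv_a(\omega_X)=-e$ when $e>0$ and $\gv_a(\omega_X)\geq 0$ when $e=0$; both assertions of the theorem then follow formally, as explained at the end.

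First I would record the lower bound. Directly from the definition, $\gv_a(\omega_X)\geq -e$ is the same as the family of codimension estimates $\codim_{\Pic^0 A}V^i_a(\omega_X)\geq i-e$ for all $i>0$, and these are exactly the Generic Vanishing Theorem of Green--Lazarsfeld \cite{GL1} for the morphism $a$. This is the genuinely deep, Hodge-theoretic ingredient, and I expect it to be the main obstacle; everything else is elementary once it, together with Theorem \ref{thm:GVvsWIT}, is in hand.

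For the matching upper bound, assume $e\geq 1$. By the lower bound and Theorem \ref{thm:GVvsWIT} we have $R^j\Phi_{P_a}\O_X=R^jq_*P_a=0$ for all $j<d-e$, so $R^{d-e}\Phi_{P_a}\O_X$ is the lowest non-vanishing higher direct image; its formation therefore commutes with base change, and its fibre at $\hat 0\in\Pic^0 A$ is $H^{d-e}(X,\O_X)$. By Hodge symmetry $h^{d-e}(X,\O_X)=h^0(X,\Omega_X^{d-e})=h^0(X,\Omega_X^{m})$, and this is non-zero: a general translation-invariant $m$-form on $A$ pulls back to a section of $\Omega_X^{m}$ whose value at a general point of $X$ is non-zero, because there the differential of $a$ has rank $m=\dim a(X)$. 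Hence $R^{d-e}\Phi_{P_a}\O_X\neq 0$, so $\gv_a(\omega_X)\not\geq -(e-1)$ by Theorem \ref{thm:GVvsWIT} applied with $e-1\geq 0$; that is, $\gv_a(\omega_X)\leq -e$. Combined with the lower bound this gives $\gv_a(\omega_X)=-e$.

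It remains to deduce the two equivalences. If the general fibre has dimension $e>0$, then $\gv_a(\omega_X)=-e$ by the above. Conversely, suppose $\gv_a(\omega_X)=-e$ with $e>0$ and let $e_0$ be the true fibre dimension; since $\gv_a(\omega_X)<0$ we cannot have $e_0=0$ (that case gives $\gv_a(\omega_X)\geq 0$), so $e_0>0$ and the forward implication yields $\gv_a(\omega_X)=-e_0$, forcing $e_0=e$. The same dichotomy shows $\gv_a(\omega_X)\geq 0$ holds precisely when $e=0$, i.e. precisely when $a$ is generically finite onto its image, which is the final assertion.
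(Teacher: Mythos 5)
Your overall skeleton is sound: reducing both equivalences to the dichotomy ``fibre dimension $e>0$ $\Rightarrow\gv_a(\omega_X)=-e$, fibre dimension $0$ $\Rightarrow\gv_a(\omega_X)\geq 0$'' is fine, and the lower bound $\gv_a(\omega_X)\geq -e$ is indeed exactly the Green--Lazarsfeld theorem, which is also all the paper itself invokes (the paper gives no self-contained proof of Theorem \ref{thm:GL1}; it quotes \cite{GL1} for this half and \cite[Thm. B]{LP}, \cite[Prop. 2.7]{BLNP} for the sharpness half). The genuine gap is in your proof of sharpness, i.e.\ of the nonvanishing $R^{d-e}\Phi_{P_a}\O_X\neq 0$. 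The principle you invoke --- that vanishing of the sheaves $R^j\Phi_{P_a}\O_X$ for $j<d-e$ forces $R^{d-e}\Phi_{P_a}\O_X$ to commute with base change --- is false. Cohomology and base change at level $j_0$ requires vanishing of the \emph{fibrewise} cohomologies $H^j(X,a^*\alpha)$ in degrees $j<j_0$ at all points $\alpha$ (in particular at the jump point $\hat 0$ you want to evaluate at), not vanishing of the direct image \emph{sheaves}; the two are very different, since $\supp R^j\Phi_{P_a}\O_X\subseteq V^j_a(\O_X)$ can be strict (already $q_*P_a=0$ whenever $a^*\alpha\neq\O_X$ for general $\alpha$, even though $H^0(X,\O_X)\neq 0$). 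Here is a counterexample to your principle with exactly this family: take $X=A\times F$ with $F$ a smooth curve of genus $h\geq 1$, $a$ the first projection, $g=\dim A$, so $d=g+1$ and $e=1$. Mukai's computation $R\Phi_{\P}\O_A\cong k(\hat 0)[-g]$ \cite{Mdual}, the projection formula and the K\"unneth decomposition give
\begin{equation*}
R^j\Phi_{P_a}\O_X\;\cong\; k(\hat 0)\otimes_{\CC}H^{j-g}(F,\O_F),
\end{equation*}
so the lowest nonvanishing transform is $R^{d-e}\Phi_{P_a}\O_X=R^{g}\Phi_{P_a}\O_X\cong k(\hat 0)$, whose fibre at $\hat 0$ is $1$-dimensional, whereas $H^{d-e}(X,\O_X)\cong H^g(A,\O_A)\oplus\bigl(H^{g-1}(A,\O_A)\otimes H^1(F,\O_F)\bigr)$ has dimension $1+gh\geq 2$. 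Base change fails at the lowest nonvanishing degree, at precisely the point where you need it.

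Moreover, the failure cannot be patched within your strategy: your Hodge-theoretic input only shows $\hat 0\in V^{d-e}_a(\O_X)$, and nonvanishing of cohomology at a single (jump) point, or even generically, does not control whether the coherent sheaf $R^{d-e}\Phi_{P_a}\O_X$ is nonzero --- in the example above $H^{d-e}(X,a^*\alpha)=0$ for every $\alpha\neq\hat 0$, and the transform $k(\hat 0)$ is nonzero for reasons invisible to fibrewise cohomology. This is exactly why the implication ``generic fibre dimension $e$ $\Rightarrow$ $\gv_a(\omega_X)\leq -e$'' is a genuine theorem rather than formal bookkeeping: \cite[Prop. 2.7]{BLNP} proves it using Koll\'ar's splitting $Ra_*\omega_X\cong\bigoplus_i R^ia_*\omega_X[-i]$ together with generic vanishing for the sheaves $R^ia_*\omega_X$ and the $\chi$-positivity input of Theorem \ref{thm:CdF}, while \cite[Thm. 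B]{LP} proves it via the BGG/derivative complex. To complete your argument you would have to replace the base-change step by one of these mechanisms (or quote those references, as the paper does).
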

In particular, observe that for any irregular variety
$1-\dim X\leq \gv(\omega_X)\leq q(X)-\dim X$.

\begin{rem}\label{rem:gt}
If $\gv_a(\omega_X)\geq 0$ and $\chi(\omega_X)>0$, then $X$ is a variety of general type. Indeed, by
the previous result $a:X\to A$ is generically finite and since $\chi(\omega_X)>0$, we have that
$V_a^0(\omega_X)=\Pic^0A$, so by \cite[Cor.2.4]{CHpm}, $\kappa(X)=\dim X$. In particular, if
$\gv_a(\omega_X)\geq 1$, then $X$ is of general type.
\end{rem}

\subsection{Subtorus theorem of Green--Lazarsfeld and Simpson}
The following theorem is due to Green and Lazarsfeld \cite[Thm 0.1]{GL2} with an important addition
due to Simpson \cite[\S4,6,7]{simpson}. 
\begin{thm}\label{thm:GL2} Let $W$ an irreducible component
of $V^i(\omega_X)$ for some $i$. Then,
\begin{enumerate*}
 \item There exists a torsion point $\beta\in \Pic^0X$ and a subtorus $B$ of $\Pic^0X$ such that
$W=\beta +B$.
 \item There exists a normal variety $Y$ of dimension $\leq d-i$, such that any smooth model of $Y$
has maximal Albanese dimension and a morphism with connected fibres $f\colon X\to Y$ such that $B$
is contained in $f^*\Pic^0Y$.
\end{enumerate*}
\end{thm}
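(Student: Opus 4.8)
The plan is to separate the statement into its analytic heart — that each component $W$ is a translate of a subtorus by a torsion point (part (a)) — and its geometric realization by a fibration (part (b)). For part (a) I would pass from $\Pic^0 X$ to its universal cover $H^1(X,\O_X)$ via the exponential map and study the germ of $V^i(\omega_X)$ at a general point $L\in W$ through the deformation theory of cohomology. By Hodge theory $T_L\Pic^0 X = H^1(X,\O_X)=H^{0,1}(X)$, and cup product with a class $v$ produces the derivative complex $H^{i-1}(\omega_X\otimes L)\xrightarrow{\cup v} H^i(\omega_X\otimes L)\xrightarrow{\cup v}H^{i+1}(\omega_X\otimes L)$. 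The first-order deformations preserving nonvanishing of $H^i$ are governed by the cohomology of this complex, which identifies the tangent cone of $V^i$ at $L$ with a linear condition on $v$.

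The crucial and most delicate step is to promote this infinitesimal linearity to the honest assertion that the germ of $V^i$ at $L$ is a linear subspace, so that $W$ is a translate of a subtorus: one must show that the higher obstructions to integrating a tangent vector vanish, so that the entire one-parameter family $L\otimes\exp(sv)$ stays in $V^i$. Following Green--Lazarsfeld, I would organize the obstruction calculus around the cup-product structure and exploit the formality supplied by Hodge theory to collapse all obstructions into the single linear condition found infinitesimally. This \emph{integrability} step is the main obstacle of the argument and the place where the Hodge-theoretic input is genuinely used.

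To upgrade ``subtorus'' to ``torsion translate of a subtorus'' I would invoke Simpson's arithmetic argument: embed $V^i(\omega_X)$ into the analogous jump loci in the moduli space of rank-one local systems (equivalently flat line bundles), observe that these loci are defined over $\overline{\mathbb{Q}}$ and stable under the relevant Galois and Gauss--Manin symmetries, and then apply the structure theorem forcing an algebraic subgroup of the character variety with these properties to be a translate of a subtorus by a torsion character. This requires genuinely arithmetic input beyond pure Hodge theory, which is why the full statement is attributed to Simpson.

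Finally, for part (b) I would realize $B$ geometrically by duality: the inclusion $B\hookrightarrow \Pic^0 X = \widehat{\Alb X}$ dualizes to a surjection $\Alb X\twoheadrightarrow T:=\hat B$, and composing with the Albanese map and taking the Stein factorization of $X\to\Alb X\to T$ produces a morphism with connected fibres $f\colon X\to Y$ with $Y\to T$ finite; in particular any smooth model of $Y$ has maximal Albanese dimension and $B\subseteq f^*\Pic^0 Y$. The bound $\dim Y\leq d-i$ then follows from the generic vanishing machinery on the base: writing a general $\alpha\in B$ as $f^*\alpha'$ and using Koll\'ar's decomposition $H^i(X,\omega_X\otimes f^*\alpha')\cong\bigoplus_j H^{i-j}(Y,R^jf_*\omega_X\otimes\alpha')$ together with the facts that the sheaves $R^jf_*\omega_X$ are $\gv$ on the maximal Albanese dimension variety $Y$ and vanish for $j$ larger than the fibre dimension, the nonvanishing of $H^i$ forces the surviving summand to have $i-j=0$ with $j\leq\dim X-\dim Y$, hence $i\leq\dim X-\dim Y$, i.e. $\dim Y\leq d-i$; this last estimate is also consistent with the interpretation of fibre dimensions via Theorem \ref{thm:GL1}.
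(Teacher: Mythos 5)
The paper does not actually prove this statement --- it is quoted as background from Green--Lazarsfeld \cite{GL2} and Simpson \cite{simpson}, and the only proof-like content the paper adds is Remark \ref{rem:Steinfact}, recording that $f$ arises as the Stein factorization of $\pi\circ\alb$, where $\pi\colon\Alb X\to\Pic^0W$ is dual to the inclusion $W\subseteq\Pic^0X$. So your proposal cannot be matched against an internal argument; it must be judged as a reconstruction of the literature. As such, your part \emph{(a)} is a faithful outline of the actual proofs: the derivative complex and tangent-cone computation, the formality/obstruction-vanishing step needed to make the germ of $V^i(\omega_X)$ linear (this is precisely the content of \cite{GL2}), and Simpson's arithmetic argument for torsion \cite{simpson}. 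You correctly flag these as the hard steps, but you do not carry any of them out, so \emph{(a)} is a correct plan rather than a proof. Your construction in part \emph{(b)} coincides exactly with the paper's Remark \ref{rem:Steinfact} (your $T=\hat B$ is $\Pic^0W$ up to canonical identification), and your deductions that $\tilde Y$ has maximal Albanese dimension and that $B\subseteq f^*\Pic^0Y$ are fine.

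Where you genuinely diverge from Green--Lazarsfeld is the dimension bound $\dim Y\leq d-i$, which the paper's remark singles out as the key point of \emph{(b)}. Your route --- Koll\'ar's splitting $H^i(X,\omega_X\otimes f^*\alpha')\cong\bigoplus_j H^{i-j}(Y,R^jf_*\omega_X\otimes\alpha')$, the generic vanishing property of higher direct images of dualizing sheaves (Hacon \cite{Happroach}, Pareschi--Popa \cite{PPGVsheaves}), and $R^jf_*\omega_X=0$ for $j$ above the generic fibre dimension --- is a legitimate modern alternative to the Hodge-theoretic bound in \cite{GL2}, and it is not circular, since none of these inputs relies on the subtorus theorem. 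But as written it has a gap: the points of $W$ are of the form $\beta\otimes f^*\alpha'$, not $f^*\alpha'$, so the nonvanishing you must exploit is $h^i(X,\omega_X\otimes\beta\otimes f^*\alpha')>0$, and Koll\'ar's splitting and vanishing, as well as the generic vanishing statement, must be applied to $\omega_X\otimes\beta$ rather than $\omega_X$. This is fixable precisely because $\beta$ is torsion (pass to the associated cyclic cover, where these theorems are known to extend), but note that this makes your part \emph{(b)} depend on Simpson's torsion statement from part \emph{(a)}, a dependence the original argument does not have; it is also cleaner to push forward all the way to the abelian variety $\hat B$ rather than work with sheaves on the possibly singular $Y$. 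With these repairs the argument closes: for general $\alpha'$ the generic vanishing property kills every summand with $i-j>0$, so $R^if_*(\omega_X\otimes\beta)\neq 0$, forcing $i\leq\dim X-\dim Y$, i.e.~$\dim Y\leq d-i$.
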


\begin{rem}\label{rem:Steinfact}
It is useful to recall that the morphism $f\colon X\to Y$ in the second part of the previous
theorem, arises as the Stein factorization of the morphism $\pi\circ \alb\colon X\to \Pic^0W$, where
$\pi\colon\Alb X\to \Pic^0W$ is the dual map of the inclusion $W\subseteq \Pic^0X$. Hence, the key
point of the second part of the theorem is the dimensional bound for $Y$.
\end{rem}

\section{Birationality criterion for maximal Albanese dimension varieties}\label{sec:bir2}
In this section, we will assume that $a:X\to A$ is a generically finite morphism onto its image,
where $A$ is an abelian variety.
We introduce another piece of notation. 
\begin{ntt}\label{term:last} Let $\F$ be a subsheaf of a line bundle  and suppose that
$\gv_a(\F)\geq 1$.
\begin{enumerate*} 
 \item We denote $U_\F$, the open subset where $h^0(\F\otimes a^*\alpha)$ has the minimal value,
i.e. $\chi(\F)$.
 \item  Let $Z$ be the exceptional locus of $a:X\to A$, that is $Z = a^{-1} (T)$, where $T$ is
the locus of points in $A$ over which the fibre of $a$ has positive dimension. 
 \item We define
\begin{equation*}
\B^\F_a(x)=\set{\alpha\in U_\F\st x\text{ is a base point of } \abs{\F\otimes a^*\alpha}}.
\end{equation*}
By Remark \ref{rem:CdF}, $\chi(\F)\geq 1$. So, by semicontinuity, it makes sense to speak of the
base locus of $\F\otimes a^*\alpha$ for all $\alpha\in\Pic^0A$.
\end{enumerate*}
\end{ntt}

The following lemma is a slight modification of \cite[Thm. 4.13]{BLNP} and it is based on \cite[Prop. 2.12 and 2.13]{PPreg1}.
\begin{lem}\label{lem:birprov} 
Suppose that $a:X\to A$ is a generically finite morphism onto its image and
let $\F$ be a subsheaf of a line bundle  such that $\gv_a(\F)\geq 1$ and $R^ia_*\F=0$ for all
$i>0$. Suppose that for a general $x\in X$,
\begin{equation*}
 \codim_{U_\F} \B^{\F}_a(x)\geq 2.
\end{equation*}
Then,  the rational map associated to the linear system $\abs{\F\otimes L}$
is birational for every line bundle $L$ such that  $\gv_a(L)\geq 1$.
\end{lem}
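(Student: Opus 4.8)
The plan is to prove birationality by producing, for two general points $x,y\in X$, a section of $\F\otimes L\otimes a^*\alpha$ (for a suitable $\alpha$) that vanishes at $x$ but not at $y$, and vice versa, thereby separating the two points. The mechanism for producing such sections is the hypothesis $\gv_a(L)\geq 1$, which by Theorem \ref{thm:posit_gv} makes $\widehat{\O}$-type transforms torsion-free; more concretely, I would exploit the ``continuous global generation'' / generic vanishing machinery of Pareschi--Popa on which \cite[Thm. 4.13]{BLNP} is built. The key reduction is that separating general points reduces to controlling, for a general $x$, the locus of twists $\alpha$ for which $x$ is a base point of $\abs{\F\otimes a^*\alpha}$, which is exactly what the invariant $\B^{\F}_a(x)$ records.

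First I would fix two general points $x,y\in X$, chosen outside the exceptional locus $Z$ (so $a$ is a local isomorphism near them) and inside the good open set $U_\F$. The codimension hypothesis $\codim_{U_\F}\B^{\F}_a(x)\geq 2$ guarantees that the set of $\alpha\in U_\F$ failing to give a section through $x$ is small; the crucial point is that the union $\B^{\F}_a(x)\cup \B^{\F}_a(y)$, still of codimension $\geq 2$, cannot fill up $U_\F$ even after we impose the extra incidence conditions coming from $L$. Second, I would use $\gv_a(L)\geq 1$ to ``spread out'' sections of $\F\otimes a^*\alpha$ into sections of $\F\otimes L\otimes a^*\alpha'$: the tensor factor $L$ with positive generic vanishing index contributes, via its Fourier--Mukai transform being a sheaf (torsion-free, Theorem \ref{thm:posit_gv}) and via $R^ia_*\F=0$ ensuring $\gv_a(\F)$ is computed correctly downstairs on $A$, a positive-dimensional family of twists over which global generation propagates. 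Here the hypothesis $R^ia_*\F=0$ for $i>0$ is what lets me push the problem down to $A$ and apply the generic vanishing estimates for $\F$ and $L$ simultaneously without interference from higher direct images.

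The technical heart is a Castelnuovo--de Franchis type argument: combining the sheaf $\F\otimes a^*\alpha$ and the line bundle $L\otimes a^*(\alpha'-\alpha)$ and using that both have $\gv_a\geq 1$, one shows that the multiplication map on global sections is surjective for $\alpha$ in an open dense subset, so that base points of the product system are forced to lie in $\B^{\F}_a(x)$-type loci. Concretely, I would argue that $x$ is a base point of $\abs{\F\otimes L\otimes a^*\gamma}$ only if $x$ is a base point of $\abs{\F\otimes a^*\alpha}$ for $\alpha$ ranging over a translate of a subtorus parametrized by $\gamma$; the set of bad $\gamma$ then has codimension at least $2$ by the hypothesis, hence a general $\gamma$ separates. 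To pass from ``separates general pairs'' to ``birational'' I would invoke the standard fact that a rational map separating two general points and a general tangent direction is birational onto its image, the tangent-direction separation following from the same codimension-$2$ estimate applied infinitesimally (taking $y$ infinitely near $x$).

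The main obstacle I anticipate is making the ``spreading out'' step fully rigorous: showing that a section through $x$ of the $\F$-system with a generic twist genuinely yields a section through $x$ of the $\F\otimes L$-system, uniformly as $\alpha$ varies, and that the failure locus is governed by $\B^{\F}_a(x)$ with the asserted codimension rather than being enlarged by the $L$-factor. This is precisely where \cite[Prop. 2.12 and 2.13]{PPreg1} and the reflexivity/torsion-freeness consequences of generic vanishing must be deployed carefully, and where the ``slight modification'' of \cite[Thm. 4.13]{BLNP} really lives: in \cite{BLNP} one takes $L$ of the special form $\omega_X\otimes(\text{twist})$, whereas here $L$ is an arbitrary line bundle with $\gv_a(L)\geq 1$, so I would need to check that the argument there uses only this hypothesis on $L$ and not its specific geometric origin. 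The remaining steps (choosing $x,y$ general, the dimension count, and the final birationality criterion) are routine once this propagation is in place.
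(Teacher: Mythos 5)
Your overall direction is right---you correctly identify that the proof must run through the Pareschi--Popa continuous global generation machinery and \cite[Prop. 2.12 and 2.13]{PPreg1}, and that the hypothesis on $\B^\F_a(x)$ is what controls base points---but the concrete mechanism you propose is not the one that works, and the step you yourself flag as the ``main obstacle'' is exactly where the key idea is missing from your sketch. The paper's device is to twist by the ideal sheaf of a general point: for $x\notin Z$ one has $R^ia_*(\F\otimes\I_x\otimes a^*\alpha)=0$ for $i>0$ (from $0\to\F\otimes\I_x\to\F\to k(x)\to 0$ together with $R^ia_*\F=0$ and $a$ finite near $x$), so Leray degenerates and $V^i_a(\F\otimes\I_x)=V^i(a_*(\F\otimes\I_x))$; moreover the same sequence gives $V^i_a(\F\otimes\I_x)=V^i_a(\F)$ for $i\geq 2$ and the inclusion $V^1_a(\F\otimes\I_x)\subseteq\B^\F_a(x)\cup V^1_a(\F)$, because the surjection $H^1(\F\otimes\I_x\otimes a^*\alpha)\twoheadrightarrow H^1(\F\otimes a^*\alpha)$ is an isomorphism exactly when $x$ is not a base point of $\abs{\F\otimes a^*\alpha}$. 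The hypotheses $\gv_a(\F)\geq 1$ and $\codim_{U_\F}\B^\F_a(x)\geq 2$ then give $\gv(a_*(\F\otimes\I_x))\geq 1$, so by \cite[Prop. 2.13]{PPreg1} the sheaf $a_*(\F\otimes\I_x)$ is continuously globally generated, hence $\F\otimes\I_x$ is CGG outside $Z$; combined with $L$ being CGG (from $\gv_a(L)\geq 1$), \cite[Prop. 2.12]{PPreg1} yields that $\F\otimes L\otimes\I_x$ is globally generated outside $Z$, which separates the general point $x$ from every other point off $Z$ and gives birationality. Your substitute for this---a ``Castelnuovo--de Franchis type'' surjectivity of multiplication maps on global sections---is never established, and it is precisely the kind of statement the CGG formalism is designed to circumvent.

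There is also a quantifier problem in your plan. Your argument concludes that ``a general $\gamma$ separates,'' i.e. that $\F\otimes L\otimes a^*\gamma$ gives a birational map for \emph{generic} $\gamma$. But the lemma asserts birationality of $\abs{\F\otimes L}$ itself, equivalently for \emph{every} twist (note $\gv_a(L\otimes a^*\gamma)=\gv_a(L)$, since the loci $V^i_a$ just translate), and this stronger form is what the application requires: in Theorem \ref{thm:fibr} one assumes non-birationality of $\omega_X^{\otimes 2}\otimes\alpha$ for a single, arbitrary $\alpha$ and derives a contradiction. The whole point of \cite[Prop. 2.12]{PPreg1} is that the tensor product of two CGG sheaves is globally generated for \emph{all} $\alpha\in\Pic^0A$, not just generic ones; a general-position argument in $\gamma$ cannot recover this. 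Finally, a minor point: separating tangent directions is not needed---generic injectivity (separating a general $x$ from all points off $Z$) already gives birationality onto the image; the infinitesimal statement is only relevant for the very ampleness refinement recorded in Remark \ref{rem:birvsva}.
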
 

\begin{proof}[Proof] 
We first compare the Fourier-Mukai transform of $\F\otimes \I_x$ and $\F$. 

\noindent\textbf{Claim.} Let $x\in X$ be a closed point out of $Z$. Then
$R^ia_*(\F\otimes \I_x \otimes a^*\alpha)=0$ for $i>0$.
This follows immediately from the exact sequence 
 \begin{equation}\label{eq:standard}
0\to \F\otimes \I_x \to \F\to k(x)\to 0
\end{equation} 
and the hypothesis that $R^ia_*\F=0$, $a$ is generically finite and $x\not\in Z$. Hence, the
degeneration of the Leray spectral sequence yields to
\begin{equation}\label{eq:GRp}V^i_a(\F\otimes \I_x)=V^i(a_*(\F\otimes \I_x)).
\end{equation}
By sequence \eqref{eq:standard}, tensored by $a^*\alpha$, it follows that
\begin{equation}\label{eq:equality}V^i_a(\F\otimes \I_x)=V^i_a(\F)\qquad\text{for all } i\geq 2.
\end{equation}
For $i=1$ we have the surjection $H^1(\F\otimes \I_x\otimes a^*\alpha)\twoheadrightarrow
H^1(\F\otimes a^*\alpha)$, 
that is an isomorphism if, and only if, $x$ is not a base point of $\abs{\F\otimes a^*\alpha}$.
In other words
$V^1_a(\F\otimes \I_x)\subseteq \B^\F_a(x)\cup V^1_a(\F)$.
Since $\gv_a(\F)\geq 1$, the hypothesis on $\B^{\F}_a(x)$ guarantees that
\begin{equation}\label{eq:codim} \codim V^1_a(\F\otimes \I_x)\geq 2,
\end{equation}
for a general $x\in X\setminus Z$. Hence by \eqref{eq:GRp}, \eqref{eq:equality} and
\eqref{eq:codim}, $\gv(a_*(\F\otimes \I_x))\geq 1$. By \cite[Prop. 2.13]{PPreg1}, 
$a_*(\F\otimes \I_x)$ is continuously globally generated (CGG, see \cite{PPreg1}).
Therefore $\F\otimes \I_x$ itself is CGG outside $Z$ (with respect to $a$). Since the same is true
for $L$, it follows from \cite[Prop 2.12]{PPreg1} that for all $\alpha\in\Pic^0 A$, $\F\otimes
L\otimes\I_x$ is globally generated outside $Z$. So the rational map associated to
$\abs{\F\otimes L}$ is birational.
\end{proof}

\begin{rem}\label{rem:birvsva}
From the proof we see that if $\codim_{U_\F} \B^{\F}_a(x)\geq 2$ for every $x\in X\setminus Z$, then
$\F\otimes L$ is very ample out of $Z$, the exceptional locus of $a$. 
\end{rem}

\subsection{Adjoint line bundles} \label{subsect:adj}
When $\F=\omega_X$ we will call $U_\F$ simply $U_0$ and $\B^{\omega_X}_a(x)$ simply by 
\begin{equation}
\B_a(x)=\set{\alpha\in U_0\st x\text{ is a base point of }\omega_X \otimes a^*\alpha }. 
\end{equation} 
Throughout subsections \S\ref{subsect:adj} and \S\ref{subsect:decomp}, we will assume that
$\gv_a(\omega_X)\geq 1$.

\begin{prop-def}\label{pr-df:birprov} 
Let $X$ be a variety such that $\gv_a(\omega_X)\geq 1$ and let $L$ be any line bundle on $X$ such
that $\gv_a(L)\geq 1$. Suppose that there exists $\alpha\in \Pic^0A$ such that $\omega_X\otimes
L\otimes a^*\alpha$ is not birational.
Then, 
\begin{equation*}
 \codim_{X\times U_0} \set{(x,\alpha)\in X\times U_0\st x\text{ is a base point of }\omega_X \otimes
a^*\alpha }=1,
\end{equation*}
and its divisorial part is dominant on $X$ and surjects on $U_0$ via the projections $p$ and $q$. We
endow this set with the  natural subscheme structure given by the image of the relative evaluation
map $q^*(q_*\L)\otimes\L^{-1}\to \O_{X\times U_0}$, where $\L= \res{\left(p^*\omega_X)\otimes
P_a\right)}{X\times U_0}$ and we call $\Y$ the union of its divisorial components that dominate
$U_0$. Let $\overline{\Y}$ be its closure in $X\times \Pic^0 X$. Then 
\begin{enumerate*}
 \item  $X$ is covered by the scheme-theoretic fibres of the projection $\overline{\Y}\to U_0$, that
we will call 
$F_\alpha$, for $\alpha$ varying in $U_0$. By definition, at a \emph{general} point $\alpha\in U_0$,
$F_\alpha$ is the fixed divisor of $\omega_X\otimes a^*\alpha$.
 \item For a general $x\in X$, the fibre of the projection $\overline\Y\to X$ is a divisor, that we
will call $\D_x$. By definition,
$\D_x$ is the closure of the union of the divisorial components of the locus of $\alpha\in U_0$ such
that $x\in \Bs(\omega_X\otimes a^*\alpha)$. 
\end{enumerate*}
\end{prop-def}

\begin{proof}
Everything follows from taking $\F=\omega_X$ in Lemma \ref{lem:birprov}. The surjectivity of the
projection to $U_0$ is consequence of the  Castelnuovo-de~Franchis inequality \ref{thm:CdF},
i.e.~$\chi(\omega_X)\geq \gv_a(\omega_X)\geq 1$.
\end{proof}

\subsection{Decomposition} \label{subsect:decomp} In the sequel we will need $a^*:\Pic^0A\to
\Pic^0X$ to be an embedding. However, for simplicity we will go one step further and we will simply
suppose that $A=\Alb X$. 
Suppose that we are under the hypotheses of the previous Proposition-Definition and consider a fixed
point $\alpha_0\in U_0$, and the map
\begin{equation}\label{eq:f}f_{\alpha_0}\colon U_0 \to \Pic^0 X\qquad \alpha\mapsto
\O_X(F_\alpha-F_{\alpha_0}),
\end{equation}
where $F_\alpha$ is the divisor defined in Proposition-Definition \ref{pr-df:birprov}{\it(a)}. For $\alpha \in U_0$, all the $F_\alpha$ are algebraically equivalent since they are the fibres of $\overline{\Y}\to U_0$, so the map is well-defined.

The following lemma shows that this map induces a decomposition of $\Pic^0X$ and that the divisors
$F_\alpha$ move algebraically along a non-trivial factor of $\Pic^0X$. Although the proof is
basically the same as \cite[Lem. 5.1]{BLNP}, we do not require $V^1(\omega_X)$ to be a finite set,
but only a proper subvariety.
\begin{lem}\label{lem:propKerf} The map defined in \eqref{eq:f}, induces an homomorphism $f:\Pic^0
X\to\Pic^0X$ such that,
\begin{enumerate*}
 \item $f^2=f$ and $\Pic^0 X$ decomposes as $\Pic^0 X\cong \ker f\times \ker(\id- f).$ Moreover
$\dim \ker(\id-f)>0$.
 \item Fix $\bar\beta\in \ker f$ such that $ U_0\cap\left(\set{\bar\beta}\times\ker(\id-f)\right)$ 
is non-empty. Then, for $\gamma\in U_0\cap \ker({\rm id}-f)$ the line bundle
$\O_X(F_{\bar\beta\otimes \gamma})\otimes \gamma^{-1}$ does not depend on $\gamma$. Since it is
effective by semicontinuity, we call it $\O_X(F)$. 
\item For all $(\beta,\gamma)\in \ker f\times\ker({\rm id}-f)\cong\Pic^0 X$ such that
$\beta\otimes\gamma\in U_0$, $\abs{\O_X(F)\otimes\gamma}$ is contained in the fixed divisor of
$\omega_X\otimes\beta\otimes\gamma$.
\end{enumerate*}
\end{lem}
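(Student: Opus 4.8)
The plan is to recognize $f_{\alpha_0}$ as (the restriction of) a group homomorphism and then to exploit the uniqueness of the fixed/moving decomposition of $|\omega_X\otimes a^*\alpha|$ to force idempotency. Since the $F_\alpha$ are the fibres of the algebraic family $\overline{\Y}\to U_0$, the assignment $\alpha\mapsto\O_X(F_\alpha)$ is a morphism $U_0\to\Pic X$, and because these divisors are algebraically equivalent, subtracting $\O_X(F_{\alpha_0})$ gives a morphism $f_{\alpha_0}\colon U_0\to\Pic^0X$. As $U_0$ is dense open in the abelian variety $\Pic^0X$ and any rational map from a smooth variety to an abelian variety is a morphism, $f_{\alpha_0}$ extends to a morphism of $\Pic^0X$, hence is of the form $f+c$ with $f\in\mathrm{Hom}(\Pic^0X,\Pic^0X)$ and $c$ constant. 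Evaluating at $\alpha_0$ gives $c=-f(\alpha_0)$, so that, setting $L_0:=\O_X(F_{\alpha_0})\otimes f(\alpha_0)^{-1}\in\Pic X$, one obtains the key formula $\O_X(F_\alpha)\cong L_0\otimes f(\alpha)$ for every $\alpha\in U_0$. Writing $g:=\id-f$ and $M_0:=\omega_X\otimes L_0^{-1}$, the moving part of $|\omega_X\otimes a^*\alpha|$ is then $M_\alpha\cong M_0\otimes g(\alpha)$, which carries no fixed divisor and is globally generated off $Z$, with $h^0(M_\alpha)=\chi(\omega_X)$.

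The heart of the matter, and the step I expect to be the main obstacle, is the idempotency $f^2=f$, equivalently that $f$ restricts to the identity on its image $B:=\mathrm{im}\,f$. To prove this I would take a general $\gamma\in B$, translate it into $U_0$ by an element $\bar\beta\in\ker f$ (permissible since $U_0$ is dense open and $B$ is a subtorus), and compare two descriptions of the fixed/moving decomposition of $|\omega_X\otimes a^*(\bar\beta\otimes\gamma)|$: the formula above gives fixed class $L_0\otimes f(\gamma)$ and moving part $M_0\otimes g(\gamma)$, whereas $\gamma\in\mathrm{im}\,f$ already exhibits $L_0\otimes\gamma\cong\O_X(F_\alpha)$ as effective for a preimage $\alpha$ of $\gamma$. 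Using that the moving part has no fixed divisor together with the uniqueness of this decomposition, I expect to conclude $g(\gamma)=0$, i.e.\ $f(\gamma)=\gamma$; this is the delicate point, where I would follow \cite[Lem. 5.1]{BLNP}, the only change being that here $U_0$ is merely the complement of a proper subvariety rather than a cofinite set.

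Once $f^2=f$ is known, the decomposition $\Pic^0X\cong\ker f\times\ker(\id-f)$ in \emph{(a)} is formal: one has $\ker f\cap\ker(\id-f)=0$, and every $\alpha$ splits as $(\alpha-f(\alpha))+f(\alpha)$ with $\alpha-f(\alpha)\in\ker f$ and $f(\alpha)\in\ker(\id-f)=B$. For the non-triviality $\dim\ker(\id-f)>0$, which amounts to $f\neq0$, I would argue by contradiction: if $f=0$ then $\O_X(F_\alpha)\cong L_0$ for all $\alpha\in U_0$ while the moving part $M_0\otimes a^*\alpha$ has no fixed divisor, and comparing classes forces $F_\alpha=\mathrm{fix}\,|L_0|$ (with $h^0(L_0)=1$) to be independent of $\alpha$; this contradicts Proposition-Definition \ref{pr-df:birprov}, where the divisorial part of $\overline{\Y}$ is dominant on $X$, so the $F_\alpha$ genuinely move.

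Finally, statements \emph{(b)} and \emph{(c)} are read off from the key formula. For $\bar\beta\in\ker f$ and $\gamma\in\ker(\id-f)$ with $\bar\beta\otimes\gamma\in U_0$ one has $f(\bar\beta\otimes\gamma)=\gamma$, hence $\O_X(F_{\bar\beta\otimes\gamma})\cong L_0\otimes\gamma$; thus $\O_X(F_{\bar\beta\otimes\gamma})\otimes\gamma^{-1}\cong L_0$ is independent of $\gamma$, and it is effective by semicontinuity, giving \emph{(b)} with $\O_X(F)=L_0$. For \emph{(c)}, the same computation yields $\O_X(F)\otimes\gamma\cong\O_X(F_{\bar\beta\otimes\gamma})$; since $F_{\bar\beta\otimes\gamma}$ is contained in the fixed divisor of $\omega_X\otimes\bar\beta\otimes\gamma$ (by the scheme structure of $\overline{\Y}$ and semicontinuity, as in Proposition-Definition \ref{pr-df:birprov}), the linear system $|\O_X(F)\otimes\gamma|$ lies in that fixed divisor.
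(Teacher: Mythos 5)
Your overall plan is the same as the paper's: the paper proves \emph{(a)} by citing \cite[Lem.~5.1]{BLNP} (noting only that $U_0$ is now the complement of a proper subvariety rather than of a finite set), and proves \emph{(b)}, \emph{(c)} by direct computation with $f$, much as you do. Your reconstruction of the first half of the BLNP argument is correct: the family $\overline\Y\to U_0$ makes $\alpha\mapsto\O_X(F_\alpha)$ a morphism, rational maps to abelian varieties extend, a morphism of abelian varieties is a homomorphism plus a translation, and this yields the key formula $\O_X(F_\alpha)\cong L_0\otimes f(\alpha)$ on $U_0$. Your non-triviality argument is also sound: if $f=0$ then all $F_\alpha$ lie in $|L_0|$, which has $h^0=1$, so the $F_\alpha$ would be a fixed divisor independent of $\alpha$, contradicting that the divisorial part of $\overline\Y$ dominates $X$.

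The step you sketch on your own, however --- the idempotency, which is the heart of \emph{(a)} --- is set up in a way that would fail. You take a general $\gamma\in B=\mathrm{im}\,f$ and ``translate it into $U_0$ by an element $\bar\beta\in\ker f$,'' justified ``since $U_0$ is dense open and $B$ is a subtorus.'' This is a non sequitur: before $f^2=f$ is known, nothing prevents $\ker f$ from being finite or even trivial (nothing yet rules out $f$ being an isogeny), in which case the coset $\gamma\cdot\ker f$ is a finite set that may lie entirely in $\Pic^0X\setminus U_0$; likewise $B$ itself could lie in $\Pic^0X\setminus U_0$. The same defect recurs when your argument needs $|M_0\otimes\bar\beta|$ to have $h^0=\chi(\omega_X)$ and no fixed component: this is known only for $M_0\otimes(\id-f)(u)$ with $u$ general in $U_0$, and one cannot arrange a prescribed $\bar\beta\in\ker f$ to be of that form. (A smaller slip: the moving part of $|\omega_X\otimes\bar\beta\otimes\gamma|$ is $M_0\otimes\bar\beta\otimes(\id-f)(\gamma)$, not $M_0\otimes(\id-f)(\gamma)$.) The argument in \cite[Lem.~5.1]{BLNP} that the paper invokes never needs to hit prescribed points: one shows $f\circ(\id-f)=0$ by taking $\tau$ general in $\Pic^0X$ and $\alpha$ general in $U_0$, so that $\alpha$, $\alpha\otimes\tau$ and $\beta:=\alpha\otimes(\id-f)(\tau)$ all lie in $U_0$; then $\omega_X\otimes\beta\cong\O_X(F_\alpha)\otimes M_{\alpha\otimes\tau}$ with $h^0(M_{\alpha\otimes\tau})=\chi(\omega_X)=h^0(\omega_X\otimes\beta)$ and $|M_{\alpha\otimes\tau}|$ free of fixed divisors, whence $F_\beta=F_\alpha$, so $f((\id-f)(\tau))=\O_X(F_\beta-F_\alpha)=\O_X$; genericity of $\tau$ then forces $f\circ(\id-f)=0$ identically. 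This uses only genericity in $\Pic^0X$, which density of $U_0$ does provide. Finally, note that \emph{(c)} must be proved for every $\beta\in\ker f$ with $\beta\otimes\gamma\in U_0$, not only for $\bar\beta$; with $f^2=f$ this is immediate, as in the paper's computation $\O_X(F_{\beta\otimes\gamma}-F_{\bar\beta\otimes\gamma})=f(\beta\otimes\bar\beta^{-1})=\O_X$, together with the fact that the fixed divisor, having $h^0=1$, is the unique effective divisor in its linear equivalence class.
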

\begin{proof} Let $\O_X(M_\alpha)=\omega_X\otimes a^*\alpha \otimes \O_X(-F_\alpha)$. Then, the
proof of {\it(a)} is the same as \cite[Lem. 5.1]{BLNP}{\it(a)}. Item {\it(b)} follows directly from
the definition of $f$. To prove {\it(c)}, let  $(\beta,\gamma)\in \ker f\times\ker(\id-f) $ such
that $\beta\otimes\gamma\in U_0$ and $E\in \abs{\O_X(F)\otimes\gamma}$. Then
$\O_X(F_{\beta\otimes\gamma}-E)\cong\O_X(F_{\beta\otimes\gamma}-F_{\bar\beta\otimes\gamma}
)=f(\beta\otimes\bar\beta^{-1})=\O_X$. Since $F_{\beta\otimes\gamma}$ is a fixed divisor of
$\abs{\omega_X\otimes\beta\otimes\gamma}$, also $E=F_{\bar\beta\otimes\gamma}$ is a fixed divisor in
$\abs{\omega_X\otimes\beta\otimes\gamma}$. \qedhere
\end{proof}
Using the decomposition given by the previous Lemma we give an explicit description of the ``half''
Poincar\'e line bundle.
\begin{lem}[{\cite[Lem. 5.1, 5.3]{BLNP}}]\label{lem:halfpoinc}\label{lem:decomp}
We call $B=\Pic^0(\ker f)$ and $C=\Pic^0(\ker (\id- f))$ so  that
\begin{equation*}
\Alb X\cong B\times C\qquad \text{ and }\qquad\Pic^0 X\cong \Pic^0 B\times \Pic^0C,
\end{equation*}
with $\dim C>0$. Then we have the following description of ``half'' Poincar\'e line bundle.
\begin{equation*}(\alb\times \id_{\Pic^0 X})^*(\O_{B\times\Pic^0 B}\boxtimes\P_C)\cong
\O_{X\times\Pic^0 X}(\overline\Y)\otimes p^*\O_X(-F)\otimes q^*\O_{\Pic^0 X}(-\D_{\bar x}),
\end{equation*}
where $\bar x$ is such that $\alb(\bar x)=0$ in $\Alb X$ and $\P_C$ is the Poincar\'e line bundle in
$C\times \Pic^0C$.
\end{lem}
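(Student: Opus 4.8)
The plan is to establish the isomorphism by the Seesaw principle, comparing the two sides on the fibres of the two projections $p\colon X\times\Pic^0 X\to X$ and $q\colon X\times\Pic^0 X\to\Pic^0 X$. Write $\alb=(\alb_B,\alb_C)\colon X\to B\times C$ for the components of the Albanese map, let $\mathcal{Q}=(\alb\times\id_{\Pic^0 X})^*(\O_{B\times\Pic^0 B}\boxtimes\P_C)$ be the left-hand side and $\mathcal{R}=\O_{X\times\Pic^0 X}(\overline{\Y})\otimes p^*\O_X(-F)\otimes q^*\O_{\Pic^0 X}(-\D_{\bar x})$ the right-hand side, and set $\mathcal{N}=\mathcal{Q}\otimes\mathcal{R}^{-1}$; the goal is to show $\mathcal{N}\cong\O_{X\times\Pic^0 X}$.

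The heart of the argument is the computation of $\mathcal{N}$ on a fibre $X\times\{\alpha\}$ for general $\alpha\in U_0$. Decomposing $\alpha=\beta\otimes\gamma$ under $\Pic^0 X\cong\ker f\times\ker(\id-f)$ and putting $\gamma_\alpha=f(\alpha)=\gamma\in\ker(\id-f)\cong\Pic^0 C$, I would first identify $\mathcal{Q}|_{X\times\{\alpha\}}$: since the $B$-factor of the box product is trivial, only $\P_C|_{C\times\{\gamma_\alpha\}}$ survives, and pulling back along $\alb_C$ gives $\mathcal{Q}|_{X\times\{\alpha\}}\cong\alb_C^*\gamma_\alpha=a^*\gamma_\alpha$. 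On the other side, $q^*\O(-\D_{\bar x})$ restricts trivially, $p^*\O(-F)|_{X\times\{\alpha\}}\cong\O_X(-F)$, and $\O(\overline{\Y})|_{X\times\{\alpha\}}\cong\O_X(F_\alpha)$, the fixed divisor of $\omega_X\otimes a^*\alpha$. Now by \eqref{eq:f} and the idempotency in Lemma \ref{lem:propKerf}\textit{(a)} one has $\O_X(F_\alpha-F_{\alpha_0})=f(\alpha)=\gamma_\alpha$, while Lemma \ref{lem:propKerf}\textit{(b)} gives $\O_X(F)\cong\O_X(F_{\alpha_0})$; hence $\O_X(F_\alpha)\cong\O_X(F)\otimes\gamma_\alpha$ and $\mathcal{R}|_{X\times\{\alpha\}}\cong\gamma_\alpha=a^*\gamma_\alpha$. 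Thus $\mathcal{N}|_{X\times\{\alpha\}}$ is trivial for general $\alpha$. The locus of $\alpha$ for which this restriction is trivial is closed, since $X$ is complete, and it contains a dense subset of the irreducible variety $\Pic^0 X$, so it is all of $\Pic^0 X$.

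Having shown that $\mathcal{N}$ is trivial on every fibre $X\times\{\alpha\}$, the Seesaw principle yields $\mathcal{N}\cong q^*\bigl(\mathcal{N}|_{\{\bar x\}\times\Pic^0 X}\bigr)$, and it remains to evaluate this restriction at $\bar x$. Because $\alb(\bar x)=0$, the bundle $\mathcal{Q}|_{\{\bar x\}\times\Pic^0 X}$ is trivial; moreover $p^*\O(-F)$ restricts trivially and $\O(\overline{\Y})|_{\{\bar x\}\times\Pic^0 X}\cong\O_{\Pic^0 X}(\D_{\bar x})$ cancels $q^*\O(-\D_{\bar x})$. Hence $\mathcal{N}|_{\{\bar x\}\times\Pic^0 X}\cong\O_{\Pic^0 X}$, and therefore $\mathcal{N}\cong\O_{X\times\Pic^0 X}$, which is the desired isomorphism.

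The fibre bookkeeping of the second paragraph is where the decomposition $\Alb X\cong B\times C$ is really used, but the one step I expect to require genuine care is the identity $\O(\overline{\Y})|_{\{\bar x\}\times\Pic^0 X}\cong\O_{\Pic^0 X}(\D_{\bar x})$. Since $\alb(\bar x)=0$ and $\alb$ is generically finite, $\bar x$ is not a general point of $X$, so I must check that the scheme-theoretic fibre of $\overline{\Y}\to X$ over $\bar x$ is the divisor $\D_{\bar x}$ and that restriction commutes with taking this fibre. I would arrange this by choosing $\bar x$ general in the finite fibre $\alb^{-1}(0)$ and exploiting that $\overline{\Y}$ is constant in the $\Pic^0 B$-direction: indeed, by Lemma \ref{lem:propKerf}\textit{(c)} every member of $\abs{\O_X(F)\otimes\gamma}$ is contained in $F_{\beta\otimes\gamma}$, which has the same class, so $F_{\beta\otimes\gamma}$ depends only on $\gamma$ and $\overline{\Y}$ is pulled back from $X\times\Pic^0 C$ along $\id_X\times\mathrm{pr}_C$. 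Consequently $\overline{\Y}\to X$ has equidimensional fibres and is flat at $\bar x$, and the required restriction formula holds.
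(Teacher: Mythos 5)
Your proof is, in substance, the paper's own proof: the paper also establishes the isomorphism by the see-saw principle, computing both sides on the fibres $X\times\{\beta\otimes\gamma\}$ for $\beta\otimes\gamma\in U_0$ (getting $\O_X(F_{\beta\otimes\gamma}-F)\cong\gamma$ on the right-hand side and $\gamma$ on the left, exactly as you do) and on the section $\{\bar x\}\times\Pic^0 X$ (getting the trivial bundle on both sides). Your observation that triviality on the fibres over the dense open set $U_0$ propagates to all of $\Pic^0 X$ because the triviality locus is closed is correct, and is a point the paper leaves implicit in its appeal to see-saw.

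The one place where you genuinely depart from the paper is the last paragraph, and there the argument does not work. It is true (by Lemma \ref{lem:propKerf}\textit{(c)} together with the fact that two linearly equivalent effective divisors, one contained in the other, must coincide) that $F_{\beta\otimes\gamma}$ depends only on $\gamma$, and hence, after a closure argument, that $\overline\Y$ is pulled back along $\id_X\times\mathrm{pr}_C$. But this does \emph{not} imply that $\overline\Y\to X$ has equidimensional fibres or is flat at $\bar x$: a divisor $\overline\Y_C\subseteq X\times\Pic^0C$ can perfectly well contain $\{\bar x\}\times\Pic^0 C$, so the product structure merely transfers the problem from $\Pic^0X$ to $\Pic^0C$. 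What is actually needed is precisely the non-containment $\{\bar x\}\times\Pic^0X\not\subseteq\overline\Y$. Once that holds, the restriction of a local equation of $\overline\Y$ to the integral variety $\{\bar x\}\times\Pic^0X$ is not identically zero, so the scheme-theoretic fibre is automatically an effective Cartier divisor (no flatness is required, since every component of such a fibre has codimension one), and this divisor is what $\D_{\bar x}$ must mean in the statement. Choosing $\bar x$ general inside the finite set $\alb^{-1}(0)$ cannot guarantee the non-containment, since that entire fibre could lie in $\bigcap_{\alpha\in U_0}F_\alpha$. The standard remedy, implicit in the paper and in \cite{BLNP}, is that the Albanese map is defined only up to translation: choose $\bar x\in X$ general and normalize $\alb$ so that $\alb(\bar x)=0$. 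With that normalization your argument (and the paper's) is complete; without it, your final step has a genuine gap.
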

\begin{proof} The decomposition of $\Pic^0X$ comes directly from Lemma \ref{lem:propKerf}{\it(a)}. 
By the definition of $\overline\Y$ (see Proposition-Definition \ref{pr-df:birprov}) and the
definition of $F$ (see Lemma \ref{lem:propKerf}{\it(b)}) we have that the line bundle
\begin{equation*}\O_{X\times\Pic^0 X}(\overline\Y)\otimes p^*\O_X(-F)\otimes q^*\O_{\Pic^0
X}(-\D_{\bar p}),\end{equation*} 
\begin{itemize*}
\item[-] restricted to $X\times\set{\beta\otimes\gamma}$ is isomorphic to
$\O_X(F_{\beta\otimes\gamma}-F)=\gamma$,
for all $(\beta,\gamma)\in U_0\subseteq \ker f\times\ker(\id -f)$;
\item[-] restricted to $\set{\bar x}\times\Pic^0 X$ is isomorphic to $\O_{\Pic^0X}(\D_{\bar
x})\otimes\O_{\Pic^0X}(-\D_{\bar x})$, i.e. trivial.
\end{itemize*}
On the other hand, $(\alb\times \id_{\Pic^0 X})^*(\O_{B\times\Pic^0 B}\boxtimes\P_C)$,
\begin{itemize*}
\item[-] restricted to $X\times\set{\beta\otimes\gamma}$ is isomorphic to $\gamma$, for all
$(\beta,\gamma)\in\ker f\times\ker(\id -f)$; 
\item[-] restricted to $\set{\bar x}\times\Pic^0 X$ is isomorphic to  $\O_{\Pic^0X}$, i.e. trivial.
\end{itemize*}
Then, the Lemma follows from the see-saw principle.
\end{proof}

\section{The bicanonical map of irregular varieties}\label{sec:bic}
The next theorem gives a sufficient numerical condition for the birationality of the bicanonical
map, analogous to Pareschi--Popa Theorem {\cite[Thm. 6.1]{PPreg3}} 
for the tricanonical map.
\begin{thm}\label{thm:fibr} Let $X$ be a smooth projective complex variety such that
$\gv(\omega_X)\geq 2$. Then, the rational map associated to $\omega_X^{2}\otimes \alpha$ is
birational onto its image for every $\alpha\in\Pic^0 X$.
\end{thm}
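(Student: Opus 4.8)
The plan is to apply the birationality criterion of Lemma \ref{lem:birprov} with the choices $\F = \omega_X$ and $L = \omega_X \otimes \alpha$. First I would check the hypotheses: since $\gv(\omega_X) \geq 2 \geq 1$, we certainly have $\gv(\omega_X) \geq 1$, and for any $\alpha \in \Pic^0 X$ we have $\gv(L) = \gv(\omega_X \otimes \alpha) \geq 1$ as well, because tensoring by a fixed element of $\Pic^0 X$ merely translates the cohomological support loci $V^i$ and hence does not change their codimensions. The vanishing $R^i\alb_* \omega_X = 0$ for $i > 0$ is the Koll\'ar vanishing theorem, so $\F = \omega_X$ is admissible. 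Thus the entire theorem reduces to verifying the codimension condition
\begin{equation*}
\codim_{U_0} \B_a(x) \geq 2 \qquad \text{for a general } x \in X,
\end{equation*}
where $\B_a(x) = \B^{\omega_X}_a(x)$ is the locus of $\alpha \in U_0$ for which $x$ is a base point of $|\omega_X \otimes a^*\alpha|$. Indeed, if this holds then Lemma \ref{lem:birprov} immediately gives that $|\omega_X \otimes L| = |\omega_X^{\otimes 2} \otimes \alpha|$ is birational, which is exactly the assertion.

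The strategy for the codimension bound is to argue by contradiction, mirroring the structure assembled in \S\ref{subsect:adj} and \S\ref{subsect:decomp}. Suppose that $\codim_{U_0} \B_a(x) = 1$ for general $x$ (it cannot be $0$, since for generic $\alpha$ the general point is not a base point). Then we are precisely in the situation of Proposition-Definition \ref{pr-df:birprov}: the base-locus set inside $X \times U_0$ has a divisorial component $\overline{\Y}$ dominating $U_0$, giving rise to the fixed divisors $F_\alpha$ and the map $f_{\alpha_0}$ of \eqref{eq:f}. Applying Lemma \ref{lem:propKerf} produces the idempotent $f$ and the decomposition $\Pic^0 X \cong \ker f \times \ker(\id - f)$ with $\dim\ker(\id - f) > 0$, hence via Lemma \ref{lem:decomp} the splitting $\Alb X \cong B \times C$ with $\dim C > 0$, together with the explicit expression for the half-Poincar\'e bundle pulled back from the $C$-factor.

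The main obstacle, and the crux of the proof, will be to extract a contradiction with the hypothesis $\gv(\omega_X) \geq 2$ from this decomposition. The idea is that the existence of a genuinely moving fixed divisor $F$ along the subtorus $C$ forces the Fourier--Mukai transform $\widehat{\O_X}$ to fail to be reflexive, whereas Theorem \ref{thm:posit_gv} asserts that $\gv(\omega_X) \geq 2$ is \emph{equivalent} to $\widehat{\O_X}$ being reflexive. Concretely, I would use the description of the half-Poincar\'e line bundle from Lemma \ref{lem:decomp} to exhibit the divisor $F$ as producing a nontrivial subsheaf or a defect in depth of the transform: the factorization through the proper quotient abelian variety $C$ means that, after twisting, the section defining $F$ survives only on a lower-dimensional locus, which translates into $\widehat{\O_X}$ having a codimension-one non-locally-free locus and thus being at most torsion-free rather than reflexive. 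Equivalently, one can read this off Corollary \ref{cor:duality}, identifying $R^i\Phi_{P_a}\omega_X$ with $\cExt$-sheaves of $\widehat{\O_X}$ and using that reflexivity controls the support codimension of the first $\cExt$. I expect the delicate point to be making this incompatibility precise while only assuming that $V^1(\omega_X)$ is a proper subvariety (not necessarily finite), which is exactly the generalization over \cite[Lem. 5.1]{BLNP} emphasized before Lemma \ref{lem:propKerf}; handling this requires keeping careful track of codimensions of the $V^i$ throughout the decomposition argument rather than invoking finiteness.
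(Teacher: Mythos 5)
Your reduction and setup coincide exactly with the paper's own proof: the hypothesis checks for Lemma \ref{lem:birprov} are correct (translation-invariance of the $V^i$ gives $\gv(\omega_X\otimes\alpha)=\gv(\omega_X)$, and $R^i\alb_*\omega_X=0$ for $i>0$ since $\alb$ is generically finite by Theorem \ref{thm:GL1}), and, assuming non-birationality, you correctly land in Proposition-Definition \ref{pr-df:birprov}, Lemma \ref{lem:propKerf} and Lemma \ref{lem:decomp}, obtaining $\Alb X\cong B\times C$ with $\dim C>0$. But what you yourself call ``the main obstacle, and the crux of the proof'' is exactly that, and your proposal contains no argument for it---only an expectation. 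The paper's mechanism is: twist the short exact sequence given by the divisor $\overline\Y$ by the \emph{complementary} half Poincar\'e bundle $(\alb\times\id)^*(\P_B^{-1}\boxtimes\O_{C\times\Pic^0C})$ and apply $R^dq_*$, producing the key sequence \eqref{eq:fibrshort}, namely $0\to(-1)^*_{\Pic^0X}\widehat{\O_X}\stackrel{\mu}{\to}\E(\D_{\bar x})\to\T\to 0$, where $\E$ is a pullback from $\Pic^0B$. One then needs three nontrivial steps: (i) $\T\neq 0$, proved using Proposition \ref{prop:IrrMukai} ($R^d\Phi_{P}\omega_X\cong k(\hat 0)$)---if $\T=0$, duality (Corollary \ref{cor:duality}) would exhibit the skyscraper $k(\hat 0)$ as a twist of a pullback from $\Pic^0 B$, forcing $\dim C=0$; (ii) that even the torsion-free quotient $\widetilde\T$ is nonzero, via the non-splitting argument using that $\mu\otimes\CC(\alpha)=0$ at points $\alpha$ where the fibre of $\overline\Y\to\Pic^0X$ is all of $X$; and (iii) the $\cExt$ long exact sequence of the bottom row of \eqref{eq:tautilde}, which via Corollary \ref{cor:duality} shows $\codim\supp R^{e-1}\Phi_{P}\omega_X=e$ for $e=\codim\supp\widetilde\T\geq 2$, i.e. $\gv(\omega_X)\leq 1$. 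None of (i)--(iii) appears in your sketch; in particular Proposition \ref{prop:IrrMukai}, which the paper flags in advance as needed precisely for this theorem, is never invoked, and without it one cannot show that the decomposition produces any actual defect in $\widehat{\O_X}$.

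Moreover, the one concrete mechanism you do propose would fail: you want to conclude that $\widehat{\O_X}$ has ``a codimension-one non-locally-free locus and thus [is] at most torsion-free rather than reflexive.'' A torsion-free coherent sheaf on a smooth variety is locally free outside a closed subset of codimension at least $2$; since $\gv(\omega_X)\geq 1$ already makes $\widehat{\O_X}$ torsion-free (Theorem \ref{thm:posit_gv}), a codimension-one non-locally-free locus cannot occur, so this route aims at a statement that is false under the standing hypotheses. What must be exhibited is a failure of reflexivity \emph{compatible} with torsion-freeness---by duality, some $\cExt^i(\widehat{\O_X},\O_{\Pic^0X})$ with $i\geq 1$ supported in codimension exactly $i+1$---and this is precisely what steps (i)--(iii) above accomplish; it is the genuinely hard content of the theorem, not the codimension bookkeeping in Lemma \ref{lem:propKerf} that you identify as the delicate point.
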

As a first corollary we have the following geometric interpretation.
\begin{thm}\label{thm:corfibr} Let $X$ be a smooth projective complex variety of maximal Albanese
dimension such that the bicanonical map is not birational. Then $0 \leq \gv(\omega_X)\leq 1$.
Moreover, it admits a fibration onto a normal projective variety $Y$ with $0 \leq \dim Y < \dim X$, 
any smooth model $\tilde Y$ of $Y$ is of maximal Albanese dimension and 
\begin{equation*}
q(X) - \dim X \leq q(\tilde Y) - \dim Y +\gv(\omega_X).
\end{equation*}
\end{thm}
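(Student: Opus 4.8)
The plan is to read off the bounds $0 \le \gv(\omega_X) \le 1$ from the two generic-vanishing theorems already available, and then to produce the fibration by feeding a carefully chosen cohomology support locus into the subtorus theorem, the displayed inequality being pure bookkeeping on dimensions.

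\smallskip

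First I would settle the bounds on $\gv(\omega_X)$. Since $X$ has maximal Albanese dimension, its Albanese map is generically finite onto its image, so Theorem \ref{thm:GL1} gives $\gv(\omega_X)\ge 0$. On the other hand, if $\gv(\omega_X)\ge 2$ held, then Theorem \ref{thm:fibr} would force $\omega_X^{2}\otimes\alpha$ to be birational for every $\alpha\in\Pic^0 X$, contradicting the hypothesis that the bicanonical map is not birational. Hence $\gv(\omega_X)\le 1$, giving $0\le\gv(\omega_X)\le 1$.

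\smallskip

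Next I would set up the subtorus theorem. Write $d=\dim X$, $q=q(X)$ and $g=\gv(\omega_X)$. As a positive-dimensional m.A.d.\ variety is irregular, Remark \ref{rem:CdF} gives $g<\infty$, so the minimum defining $\gv$ is attained at some index $i_0>0$ with $V^{i_0}(\omega_X)\neq\emptyset$ and $\codim_{\Pic^0 X}V^{i_0}(\omega_X)=i_0+g$. Choose an irreducible component $W$ of $V^{i_0}(\omega_X)$ of minimal codimension, so $\codim_{\Pic^0 X}W=i_0+g$. By Theorem \ref{thm:GL2}\emph{(a)} we may write $W=\beta+B$ with $B$ a subtorus of $\Pic^0 X$, and by Theorem \ref{thm:GL2}\emph{(b)} there is a fibration $f\colon X\to Y$ onto a normal variety with $\dim Y\le d-i_0$, whose smooth model $\tilde Y$ is of maximal Albanese dimension, and with $B\subseteq f^*\Pic^0 Y$. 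Since $i_0\ge 1$, this already yields $0\le\dim Y\le d-i_0<d=\dim X$, and it records that $\tilde Y$ is m.A.d.

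\smallskip

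Finally I would run the numerics. From $\codim_{\Pic^0 X}W=q-\dim B=i_0+g$ we get $\dim B=q-i_0-g$. The inclusion $B\subseteq f^*\Pic^0 Y$ gives $\dim B\le q(\tilde Y)$; intrinsically, by Remark \ref{rem:Steinfact} the map $f$ is the Stein factorization of $\pi\circ\alb$, where $\pi\colon\Alb X\to \Pic^0 W$ is dual to the inclusion $B\hookrightarrow\Pic^0 X$ and hence surjective with $\dim\Pic^0 W=\dim B$; therefore the image of $\tilde Y$ in $\Pic^0 W$ generates it, so $\Alb\tilde Y\to\Pic^0 W$ is surjective and $q(\tilde Y)=\dim\Alb\tilde Y\ge\dim B$. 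Combining $q-i_0-g=\dim B\le q(\tilde Y)$ with $\dim Y\le d-i_0$ gives
\begin{equation*}
q-g-q(\tilde Y)\le i_0\le d-\dim Y,
\end{equation*}
and rearranging yields $q(X)-\dim X\le q(\tilde Y)-\dim Y+\gv(\omega_X)$, as claimed.

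\smallskip

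The step I expect to be most delicate is the choice of $W$ together with the transfer of the two bounds $\dim Y\le d-i_0$ and $\dim B\le q(\tilde Y)$ into a single chain: one must use the \emph{same} index $i_0$ in both, and verify that the second bound holds even when $B$ is trivial, where the inclusion $B\subseteq f^*\Pic^0 Y$ is vacuous and one falls back on $\dim B=0\le q(\tilde Y)$; the chain then still closes because $i_0\le d$.
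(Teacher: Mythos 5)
Your proposal is correct and takes essentially the same route as the paper: the bounds $0\leq \gv(\omega_X)\leq 1$ are obtained exactly as in the paper's proof (via Theorems \ref{thm:GL1} and \ref{thm:fibr}), and the fibration-plus-numerics argument you spell out --- choosing a component $W$ of $V^{i_0}(\omega_X)$ computing the $\gv$-index, applying the subtorus Theorem \ref{thm:GL2}, and chaining $\dim B = q-i_0-\gv(\omega_X)\leq q(\tilde Y)$ with $\dim Y\leq d-i_0$ --- is precisely the Pareschi--Popa argument of \cite[Thm.~B]{PPCdF} that the paper invokes by reference. The only difference is that you reconstruct that cited argument in full (correctly, including the degenerate case $\dim B=0$) instead of citing it.
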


\begin{proof} By Theorems \ref{thm:GL1} and \ref{thm:fibr}, it is clear that $0 \leq
\gv(\omega_X)\leq 1$. Now, the proof is the same as the proof of \cite[Thm. B]{PPCdF}.
\end{proof}

\begin{ex}\label{ex:exemples} We would like to show examples of varieties with $\gv(\omega_X)\geq 2$. 
For curves $C$, this is equivalent to $g(C) \geq 3$. For surfaces $S$, is equivalent to suppose that $q(S)\geq 4$ and $S$ does not admit an irregular fibration to a curve of genus $\leq q(S)-3$ (see \cite[Cor. 2.3]{Be}).

On the other hand, if $A$ is a simple abelian variety, then every subvariety $X$ of codimension $\geq 2$ has $\gv(\omega_X)\geq 2$.
Moreover, the property of having $\gv(\omega_X)\geq 2$ is closed under taking products and cyclic coverings induced by a torsion point $\alpha\in \Pic^0X-V^1(\omega_X)$. 
\end{ex}

The rest of the paper is devoted to the proof of Theorem \ref{thm:fibr}.
\begin{proof} Assume that $\gv(\omega_X)\geq 1$ and there exists $\alpha\in \Pic^0X$ such that
$\omega_X^{\otimes 2}\otimes \alpha$ is non-birational. Then, we want to see that
$\gv(\omega_X)=1$. 
Under these hypotheses we can apply Proposition-Definition \ref{pr-df:birprov} and Lemma
\ref{lem:halfpoinc}, so $\Alb X\cong B\times C$, where $B=\Pic^0(\ker (\id -f))$ and $C=\Pic^0(\ker
f)$. We have the following commutative diagram 
\begin{equation}\label{eq:2diagrames}
\xymatrix{
\Pic^0X\ar[d]_{p_{\hat{b}}}&X\times \Pic^0X\ar[l]_q \ar[r]^{\alb\times\id}\ar[d]^{\id\times p_{\hat
b}} &\Alb X\times \Pic^0X\ar[d]^{p_b\times p_{\hat{b}}}\\
\Pic^0B& X\times\Pic^0B\ar[l]^q \ar[r]_{b\times \id} & B\times\Pic^0B}
\end{equation}
where 
\begin{itemize*}
 \item[-] $p_b:\Alb X\to B$ and $p_{\hat b}:\Pic^0X\to\Pic^0B$ are the corresponding projections,
 \item[-] $b$ is the composition by $b:X\stackrel{\alb}{\to}\Alb X \stackrel{p_b}{\to} B$, and
 \item[-] abusing notation we also call $q$ either the projection $X\times\Pic^0X\to \Pic^0X$ or
$X\times \Pic^0B\to\Pic^0B$ and $p$ the projections $X\times\Pic^0X\to X$ or $X\times \Pic^0B\to
X$. 
\end{itemize*}
The effectiveness of $\overline\Y$ give us the following short exact sequence on $X\times \Pic^0 X$
\begin{equation*}
0\to (\alb\times \id)^*(\O_{B\times\Pic^0 B}\boxtimes\P_C)^{-1} \stackrel{\cdot \overline\Y}\to
p^*\O_X(F)\otimes q^*\O({\mathcal D}_{\bar x})\to \res{\left(p^*\O_X(F)\otimes q^*\O({\mathcal
D}_{\bar x})\right)}{\overline\Y}\to 0.
\end{equation*}
Recall that $P=(\alb\times \id_{\Pic^0 X})^*(\P_B\boxtimes\P_C)$ since the Poincar\'e line bundle
$\P$ in $\Alb X\times\Pic^0X$ is isomorphic to $\P_B\boxtimes\P_C$. We apply the functor $R^d
q_*(\>\cdot\>\otimes(\alb\times\id_{\Pic^0 X})^*(\P_B^{-1}\boxtimes\O_{C\times\Pic^0 C}))$, that is,
we tensor by the other ``half'' Poincar\'e line bundle and we consider the top direct image. We get
\begin{align*}
\cdots\to &R^d\Phi_{P^{-1}}(\O_X)\to  R^d q_*\left(p^*\O_X(F)\otimes (\alb\times\id_{\Pic^0
X})^*(\P_B^{-1}\boxtimes\O_{C\times\Pic^0 C})\right)\otimes \O_{\Pic^0X}(\mathcal{D}_{\bar x})\to \\
&\to R^d q_*\left(\res{(p^*\O_X(F)\otimes q^*\O_{\Pic^0X}(\D_{\bar x}))}{\overline\Y}\otimes
(\alb\times\id_{\Pic^0 X})^*(\P_B^{-1}\boxtimes\O_{C\times\Pic^0 C})\right)\to 0 
\end{align*}
Using that $R^i\Phi_{P^{-1}}\cong (-1)_{\Pic^0 X}^*R^i\Phi_{P}$ (see \eqref{eq:FMPdual}), we have
the
following short exact sequence,
\begin{equation}\label{eq:fibrshort} 0\to (-1)^*_{\Pic^0 X}\widehat{\O_X}\buildrel\mu\over\to
\E({\mathcal D}_{\bar x})\to \T\to 0\end{equation}
where:
\begin{enumerate*}
\item By base change, $\E= R^d q_*(p^*\O_X(F)\otimes(\alb\times\id_{\Pic^0
X})^*(\P_B^{-1}\boxtimes\O_{C\times\Pic^0 C}))$ is a coherent sheaf of rank
$h^d(\O_X(F)\otimes\beta^{-1})$ by a general $\beta\in \ker f$, i.e
$h^0(\omega_X\otimes\O_X(-F)\otimes\beta)=\chi(\omega_X)$ by Lemma \ref{lem:propKerf}{\it(c)}. Then,
\begin{align*}
\E&= R^d q_*(p^*\O_X(F)\otimes(\alb\times\id)^*(\P_B^{-1}\boxtimes\O_{C\times\Pic^0 C}))\\
&= R^d q_*(p^*\O_X(F)\otimes (\id\times p_{\hat b})^*(b\times\id)^*\P_B^{-1}) &\text{right square of
\eqref{eq:2diagrames}}\\
&= R^d q_*(\id\times p_{\hat b})^* (p^*\O_X(F)\otimes (b\times\id)^*\P_B^{-1}) &\text{abuse of
notation on }p\\
&= p_{\hat b}^*R^d q_*(p^*\O_X(F)\otimes (b\times\id)^*\P_B^{-1}) &\text{flat base change}\\
&= p_{\hat b}^*R^d\Phi_{P_b^{-1}}(\O_X(F)),
\end{align*}
following the notation of \eqref{eq:PoicPa} and \eqref{eq:FMPa}.
\item $\T=R^d q_*\left(\res{(p^*\O_X(F)\otimes q^*\O_{\Pic^0X}(\D_{\bar x}))}{\overline\Y}\otimes
(\alb\times\id_{\Pic^0 X})^*(\P_B^{-1}\boxtimes\O_{C\times\Pic^0 C})\right)$ is supported at the
locus of the $\alpha\in\Pic^0 X$ such that the fibre of the projection $q\colon\overline\Y\to \Pic^0
X$ has dimension $d$, i.e. it coincides with $X$. Such locus is contained in $V^1(\omega_X)$,
therefore, since $\gv(\omega_X)\geq 1$, $\codim \supp \T \geq 2$. 
\item The map $\mu$ is injective since it is a generically surjective map of sheaves of the same
rank (recall that $\rk\widehat{\O_X}=\chi(\omega_X)$), and, as $\gv(\omega_X)\geq 1$, the source
$\widehat{\O_X}$ is torsion-free (Thm. \ref{thm:posit_gv}).
\item $\mu$ is $R^d q_*(m_s)$, where $m_s$ is the multiplication by the section defining
$\overline\Y$. By base change \cite[Cor. 3, pg. 53]{MAV},
$R^d q_*(m_s)\otimes \CC(\alpha)= H^d(\res{m_s}{q^{-1}\set{\alpha}})$ where $q$ is  the projection
$q\colon\overline\Y\to \Pic^0 X$.
When $q^{-1}\set{\alpha}=X$, $\res{m_s}{q^{-1}\set{\alpha}}=0$, so in these points $R^d
q_*(m_s)\otimes \CC(\alpha)=0$.
\end{enumerate*}
\begin{claim} $\T\neq 0$.
\end{claim}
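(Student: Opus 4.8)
The plan is to argue by contradiction: I assume $\T=0$ and derive an incompatibility with Proposition \ref{prop:IrrMukai}. If $\T=0$, then since $\mu$ is injective (being a generically surjective map between torsion-free sheaves of the same rank), it is also surjective, hence an isomorphism, so $(-1)^*_{\Pic^0 X}\widehat{\O_X}\cong \E(\D_{\bar x})$. By the identification of $\E$ computed above, $\E=p_{\hat b}^*\,\mathcal{G}$ with $\mathcal{G}=R^d\Phi_{P_b^{-1}}(\O_X(F))$ a coherent sheaf on $\Pic^0 B$. Thus, up to the line-bundle twist by $\O_{\Pic^0 X}(\D_{\bar x})$, the sheaf $(-1)^*_{\Pic^0 X}\widehat{\O_X}$ would be pulled back from $\Pic^0 B$ along the projection $p_{\hat b}\colon\Pic^0 X\to\Pic^0 B$, whose fibres are translates of the positive-dimensional abelian subvariety $\Pic^0 C\subset\Pic^0 X$ furnished by the decomposition of Lemma \ref{lem:halfpoinc}.

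The contradiction comes from computing the top $\cExt$-sheaf of $(-1)^*_{\Pic^0 X}\widehat{\O_X}$ in two ways. On the one hand, since here $A=\Alb X$ the pullback $\alb^*$ is an isomorphism $\Pic^0(\Alb X)\cong\Pic^0 X$, so Proposition \ref{prop:IrrMukai} gives $R^d\Phi_P\omega_X\cong k(\hat 0)$; as $\gv(\omega_X)\geq 1\geq 0$, Corollary \ref{cor:duality} then yields $\cExt^d_{\O_{\Pic^0 X}}\big((-1)^*_{\Pic^0 X}\widehat{\O_X},\O_{\Pic^0 X}\big)\cong k(\hat 0)$, a nonzero sheaf supported at the single point $\hat 0$. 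On the other hand, $\cExt$ commutes with a line-bundle twist and, for the flat (indeed smooth) morphism $p_{\hat b}$, with pullback: resolving $\mathcal{G}$ locally by free sheaves on $\Pic^0 B$ and pulling back gives $\cExt^d(p_{\hat b}^*\mathcal{G},\O_{\Pic^0 X})\cong p_{\hat b}^*\cExt^d(\mathcal{G},\O_{\Pic^0 B})$, whence $\cExt^d\big((-1)^*_{\Pic^0 X}\widehat{\O_X},\O_{\Pic^0 X}\big)\cong p_{\hat b}^*\cExt^d(\mathcal{G},\O_{\Pic^0 B})\otimes\O(-\D_{\bar x})$. The support of this sheaf is $p_{\hat b}^{-1}$ of the support of $\cExt^d(\mathcal{G},\O_{\Pic^0 B})$, i.e.\ a union of fibres of $p_{\hat b}$; since each fibre is a translate of $\Pic^0 C$ and has positive dimension, this support is either empty or positive-dimensional, and so it can never equal the single point $\{\hat 0\}$. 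This contradiction forces $\T\neq 0$.

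The main obstacle, and the real content of the claim, is exactly this clash between the two descriptions: Proposition \ref{prop:IrrMukai} pins the top Fourier--Mukai transform of $\omega_X$ (equivalently, the top $\cExt$ of $\widehat{\O_X}$) at the single point $\hat 0$, whereas the factorization $\E=p_{\hat b}^*\mathcal{G}$ forces the corresponding support to be $p_{\hat b}$-saturated, hence a union of positive-dimensional fibres. The only technical points to verify are the compatibility of $\cExt^d$ with the flat pullback $p_{\hat b}^*$ and with the harmless twist by $\O(\D_{\bar x})$, both of which are routine once $\mathcal{G}$ is resolved by locally free sheaves; it is precisely at this stage that the positive dimension of the factor $\Pic^0 C$ from Lemma \ref{lem:halfpoinc} enters decisively, since it is the positive dimension of the fibres of $p_{\hat b}$ that is incompatible with a point support.
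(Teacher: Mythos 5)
Your proposal is correct and follows essentially the same route as the paper: assume $\T=0$ so that $\mu$ is an isomorphism, combine Proposition \ref{prop:IrrMukai} and Corollary \ref{cor:duality} to identify $\cExt^d\bigl((-1)^*_{\Pic^0 X}\widehat{\O_X},\O_{\Pic^0 X}\bigr)$ with $k(\hat 0)$, and then use the factorization $\E=p_{\hat b}^*\,R^d\Phi_{P_b^{-1}}(\O_X(F))$ to see that this $\cExt^d$ is (up to the twist by $\O(-\D_{\bar x})$) pulled back along $p_{\hat b}$, hence supported on a union of translates of the positive-dimensional $\Pic^0 C$, contradicting the point support. Your write-up just makes explicit the compatibility of $\cExt^d$ with flat pullback and the line-bundle twist, which the paper leaves implicit in its three-line computation, and the final contradiction with $\dim C>0$ is exactly the paper's appeal to Lemma \ref{lem:decomp}.
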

\begin{proof}[Proof of the Claim]
Suppose that $\T=0$, so $\mu$ is an isomorphism. Taking $\cExt^d(\>\cdot\>,\O_{\Pic^0X})$ we get
\begin{align*}
k(\hat 0) &=R^d\Phi_{P}\omega_X&\text{Prop. \ref{prop:IrrMukai}}\\
&=\cExt^d(\E,\O_{\Pic^0X})\otimes\O(-\D_{\bar x}) &\cExt^d(\mu,\O_{\Pic^0X})\text{ and Cor.
\ref{cor:duality}}\\
&=p_{\hat b}^*\cExt^d(R^d\Phi_{P_b}(\O_X(F)),\O_{\Pic^0B})\otimes\O(-\D_{\bar x}) &\text{see item
{\it
(a)},}
\end{align*}
which implies that $\codim_{\Alb X} B=\dim \ker(\id -f)=0$ contradicting Lemma \ref{lem:decomp}.
\end{proof}

Let $\tau(\E(\D_{\bar x}))$ be the torsion part of $\E(\D_{\bar x})$ and $\widetilde{\E(\D_{\bar
x})}$ the quotient of $\E(\D_{\bar x})$ by its torsion part. Hence $\widetilde{\E(\D_{\bar x})}$ is
torsion-free. Now consider the following composition
\begin{equation*}
\xymatrix@C=1.8pc@R=1.2pc{
(-1)^*_{\Pic^0 X}\widehat{\O_X}\ar[r]^(.6)\mu\ar[rd]_{\tilde\mu} & \E({\mathcal D}_{\bar
x})\ar@{>>}[d]\\
&\widetilde{\E(\D_{\bar x})}.}
\end{equation*}
Since $\tilde\mu$ is generically surjective and $(-1)^*_{\Pic^0 X}\widehat{\O_X}$ is torsion-free
(recall that $\gv(\omega_X)\geq 1$), we have that $\tilde\mu$ is injective. Completing the diagram
we get,
\begin{equation}\label{eq:tautilde}
\xymatrix@C=1.3pc@R=1.pc{&& 0\ar[d]&0\ar[d]\\
&& \tau(\E(\D_{\bar x}))\ar[d]\ar@{=}[r]&\tau(\E(\D_{\bar x}))\ar[d]\\
0\ar[r] &(-1)^*_{\Pic^0 X}\widehat{\O_X}\ar[r]^(.6)\mu\ar@{=}[d] & \E({\mathcal D}_{\bar
x})\ar@{>>}[d]\ar[r]& \T\ar[r]\ar[d]&0\\
0\ar[r] &(-1)^*_{\Pic^0 X}\widehat{\O_X}\ar[r]^(.6){\tilde\mu} &\widetilde{\E(\D_{\bar
x})}\ar[d]\ar[r]& \widetilde\T\ar[r]\ar[d]&0\\
&&0&0
} 
\end{equation}
If $\widetilde\T= 0$, then the middle horizontal short exact sequence splits. But, for $\alpha$ a
closed point in the support of $\T$ (by the previous claim we know that $\T\neq 0$),
$\mu\otimes\CC(\alpha)=0$ by item {\it (d)}, so $\mu$ cannot split. Therefore $\widetilde\T\neq 0$.

Let $e=\codim_{\Pic^0X} \supp\widetilde \T\geq 2$ (see item {\it (c)}). Then
$\codim_{\Pic^0X}\supp \cExt^e(\widetilde \T,\O_{\Pic^0X})=e$. Now, we apply the functor
$\cExt^i(\>\cdot\>,\O_{\Pic^0X})$ to the bottom row of \eqref{eq:tautilde} using Corollary
\ref{cor:duality}
\begin{equation*}
\ldots\to R^{e-1}\Phi_{P}\omega_X\to \cExt^e(\widetilde
\T,\O_{\Pic^0X})\to\cExt^e(\widetilde{\E(\D_{\bar x})},\O_{\Pic^0X})\to\ldots
\end{equation*}
Since $\widetilde{\E(\D_{\bar x})}$ is torsion-free, $\codim_{\Pic^0X}\supp
\cExt^e(\widetilde{\E(\D_{\bar x})},\O_{\Pic^0X})>e$. Therefore, we must have $\codim_{\Pic^0X}\supp
R^{e-1}\Phi_{P}\omega_X=e$ and $\gv(\omega_X)\leq 1$.
\end{proof}

\providecommand{\bysame}{\leavevmode\hbox
to3em{\hrulefill}\thinspace}

\end{document}